\documentclass[preprint,11pt]{elsarticle}

\usepackage{amsfonts, amsmath, amscd}
\usepackage[psamsfonts]{amssymb}

\usepackage{amssymb}

\usepackage{pb-diagram}

\usepackage[all,cmtip]{xy}

\usepackage[usenames]{color}

\headheight=0in
\headsep = 0.51in
\topmargin=0in
\textheight=8.950in
\textwidth=6.5in
\oddsidemargin=-0.19in
\evensidemargin=-0.19in
\parindent=0.2in


\newtheorem{theorem}{Theorem}[section]
\newtheorem{lemma}[theorem]{Lemma}
\newtheorem{corollary}[theorem]{Corollary}

\newtheorem{remark}[theorem]{Remark}
\newtheorem{proposition}[theorem]{Proposition}

\newtheorem{example}[theorem]{Example}

\newtheorem{problem}[theorem]{Problem}

\newproof{proof}{Proof}

\numberwithin{equation}{section}
\numberwithin{theorem}{section}


\newcommand{\e}{\varepsilon}
\newcommand{\w}{\omega}


\newcommand{\NN}{\mathbb{N}}

\newcommand{\IR}{\mathbb{R}}





\newcommand{\DD}{\mathcal{D}}

\newcommand{\KK}{\mathcal{K}}
\newcommand{\Nn}{\mathcal{N}}
\newcommand{\AAA}{\mathcal{A}}


\newcommand{\supp}{\mathrm{supp}}

\newcommand{\Ra}{\Rightarrow}

\newcommand{\LRa}{\Leftrightarrow}


\newcommand{\Id}{\mathsf{id}}


\newcommand{\CC}{C_k}

\newcommand{\SM}{{\setminus}}







\begin{document}

\begin{frontmatter}

\title{Functions on products $X \times Y$ with applications \\ to  Ascoli spaces, $k_\IR$-spaces and $s_\IR$-spaces}

\author{Saak Gabriyelyan}
\ead{saak@math.bgu.ac.il}
\address{Department of Mathematics, Ben-Gurion University of the Negev, Beer-Sheva, Israel}

\author{Evgenii Reznichenko}
\ead{erezn@inbox.ru}
\address{Department of Mathematics, Lomonosov Mosow State University, Moscow, Russia}

\begin{abstract}
We prove that a Tychonoff space $X$ is (sequentially) Ascoli iff  for every compact space $K$ (resp., for a convergent sequence $\mathbf{s}$), each separately continuous $k$-continuous function $\Phi:X\times K\to \IR$ is continuous. We apply these characterizations to show that an open subspace of a (sequentially) Ascoli space is (sequentially) Ascoli, and that the $\mu$-completion and the Dieudonn\'{e} completion of a (sequentially)  Ascoli space are (sequentially) Ascoli. We give also cover-type characterizations of Ascoli spaces and suggest an easy method of construction of pseudocompact Ascoli spaces which are not $k_\IR$-spaces and show that each space $X$ can be closely embedded into such a space. Using a different method we prove Hu\v{s}ek's theorem: a Tychonoff space $Y$ is a locally pseudocompact $k_\IR$-space iff $X\times Y$ is a $k_\IR$-space for each $k_\IR$-space $X$. It is proved that $X$ is an $s_\IR$-space iff  for every locally compact sequential space $K$,  each $s$-continuous function $f:X\times K\to\IR$ is continuous.
\end{abstract}

\begin{keyword}
Ascoli space \sep $k_\IR$-space \sep $s_\IR$-space \sep  $k$-continuous

\MSC[2010] 54A05 \sep  54B05 \sep  54B15 \sep 54D60

\end{keyword}

\end{frontmatter}


\section{Introduction}


All spaces are assumed to be Tychonoff. We denote by $C_p(X)$ and $\CC(X)$ the space $C(X)$ of all continuous real-valued functions on a space $X$ endowed with the pointwise topology or the compact-open topology, respectively. Recall that a space $X$ is called a {\em $k$-space} if for each non-closed subset $A\subseteq X$ there is a compact subset $K\subseteq X$ such that $A\cap K$ is not closed in $K$. The space $X$ is a {\em $k_\IR$-space} if every $k$-continuous function $f:X\to\IR$ is continuous (recall that $f$ is {\em $k$-continuous} if each restriction of $f$ to any compact set $K\subseteq X$ is continuous). Each $k$-space is a $k_\IR$-space, but the converse is false in general. $k$-spaces and $k_\IR$-spaces are widely studied in General Topology (see, for example, the classical book of Engelking \cite{Eng} or  Michael's article \cite{Mi73}). The meaning of $k_\IR$-spaces is explained by the following characterization (see for example Theorem 5.8.7 of \cite{NaB}).

\begin{theorem} \label{t:kR-classical}
A Tychonoff space $X$ is a $k_\IR$-space if, and only if, the space $\CC(X)$ is complete.
\end{theorem}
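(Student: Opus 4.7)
The plan is to prove both implications by exploiting the fact that the topology on $\CC(X)$ is the topology of uniform convergence on compact sets, generated by the seminorms $p_K(h) = \sup_{x\in K}|h(x)|$ as $K$ ranges over the compact subsets of $X$.

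For the forward direction, assume $X$ is a $k_\IR$-space and take a Cauchy net $(f_\alpha)$ in $\CC(X)$. For each compact $K\subseteq X$ the restricted net $(f_\alpha|_K)$ is Cauchy in the Banach space $C(K)$ with the sup norm, so it converges uniformly to some continuous function $f_K\in C(K)$. A straightforward compatibility check shows $f_K|_L = f_L$ whenever $L\subseteq K$, so the family $\{f_K\}$ defines a single function $f:X\to\IR$ whose restriction to every compact set is continuous, i.e.\ $f$ is $k$-continuous. The $k_\IR$-hypothesis then promotes $f$ to a continuous function, and by construction $p_K(f_\alpha - f)\to 0$ for every compact $K$, so $f_\alpha\to f$ in $\CC(X)$.

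For the reverse direction, assume $\CC(X)$ is complete and let $f:X\to\IR$ be $k$-continuous. Index the family of compact subsets of $X$ by inclusion. For each compact $K\subseteq X$ the restriction $f|_K\colon K\to\IR$ is continuous, and since $X$ is Tychonoff (in fact normal on $K$ and $K$ is closed in $X$) Tietze's extension theorem supplies a continuous $g_K:X\to\IR$ with $g_K|_K = f|_K$. The net $(g_K)$ is Cauchy in $\CC(X)$: given a basic neighborhood $U = \{h : p_L(h)<\e\}$ of $0$, for any $K_1,K_2\supseteq L$ we have $g_{K_i}|_L = f|_L$, so $g_{K_1} - g_{K_2}$ vanishes identically on $L$. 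By completeness, $g_K\to g$ in $\CC(X)$ for some continuous $g$. Convergence in $\CC$ forces pointwise convergence, but for every $x\in X$ the net $g_K(x)$ is eventually equal to $f(x)$ (for any $K\ni x$). Therefore $g=f$, so $f$ is continuous and $X$ is a $k_\IR$-space.

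The only genuinely delicate point is the application of Tietze in the reverse direction: one needs that $f|_K$ extends to a \emph{continuous} real-valued function on all of $X$. This is exactly where the Tychonoff (equivalently, for our purposes, the fact that compact subsets of a Tychonoff space are closed and $C^\ast$-embedded in some completely regular enlargement) assumption on $X$ enters, and it ensures the Cauchy net $(g_K)$ actually lives in $\CC(X)$. Everything else is bookkeeping with the defining seminorms of $\CC(X)$, together with the classical completeness of $C(K)$ for compact $K$.
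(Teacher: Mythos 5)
Your proof is correct and is the standard argument for this classical theorem, which the paper itself does not prove but only cites (Theorem 5.8.7 of Narici--Beckenstein): completeness is transferred back and forth through the identification of $k$-continuous functions with compatible families of elements of the Banach spaces $C(K)$. The one step worth tightening is the extension in the reverse direction: Tietze cannot be applied to $X$ itself (a Tychonoff space need not be normal), but a compact $K\subseteq X$ is $C$-embedded in $X$ because it is closed in the compact, hence normal, space $\beta X$, so one extends $f{\restriction}_K$ over $\beta X$ and restricts back to $X$ to obtain $g_K$ --- exactly the point you flag at the end, and with that justification the argument is complete.
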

\smallskip

One of the basic theorems in Analysis is the Ascoli theorem which states that if $X$ is a $k$-space, then every compact subset of $\CC(X)$ is evenly continuous, see Theorem 3.4.20 in \cite{Eng}. In \cite{Noble}, Noble proved that every $k_\IR$-space satisfies the conclusion of the Ascoli theorem.
So it is natural to consider the class of Tychonoff spaces which satisfy the conclusion of Ascoli's theorem. Following \cite{BG}, a  Tychonoff space $X$ is called an {\em Ascoli space} if every compact subset $\KK$ of $\CC(X)$  is evenly continuous, that is the map $X\times\KK \ni(x,f)\mapsto f(x)$ is continuous. In other words, $X$ is Ascoli if, and only if, the compact-open topology of $\CC(X)$ is Ascoli in the sense of \cite[p.45]{mcoy}. One can easily show that $X$ is Ascoli if, and only if, every compact subset of $\CC(X)$ is equicontinuous. Recall that a subset $H$ of $C(X)$ is {\em equicontinuous} if for every $x\in X$ and each $\e>0$ there is an open neighborhood $U$ of $x$ such that $|f(x')-f(x)|<\e$ for all $x'\in U$ and $f\in H$. A more general class of spaces was introduced in \cite{Gabr:weak-bar-L(X)}: a Tychonoff space $X$ is said to be {\em sequentially Ascoli} if every convergent sequence in $\CC(X)$ is equicontinuous. Clearly, every Ascoli space is sequentially Ascoli, but the converse is not true in general (every non-discrete $P$-space is sequentially Ascoli but not Ascoli, see \cite{Gabr:weak-bar-L(X)}). For numerous results concerning the (sequentially) Ascoli property, see \cite{Gabr-B1,Gab-LF,Gabr:weak-bar-L(X),Gabr-seq-Ascoli}.
\smallskip

Analogously to $k_\IR$-spaces, one can define $s_\IR$-spaces as follows. A space $X$ is called  an {\em $s_\IR$-space} if every sequentially continuous function $f:X\to\IR$ is continuous. Recall that a function $f:X\to Y$ between topological spaces $X$ and $Y$ is called {\em sequentially continuous} or {\em $s$-continuous} if the restriction of $f$ onto each convergent sequence in $X$ is continuous. $s$-continuous functions and $s$-continuous functionals appear naturally not only in General Topology, but also in Functional Analysis, see for example Mazur \cite{Mazur1952} and Wilansky \cite{Wilansky}.
The sequential spaces form the most important subclass of the class of  $s_\IR$-spaces. A space $X$ is {\em sequential} if for each non-closed subset $A\subseteq X$ there is a sequence $\{a_n\}_{n\in\NN}\subseteq A$ converging to some point $a\in \bar A\setminus A$. The relationships between the introduced classes of spaces are described in the following diagram in which none of the implications is invertible (see Remarks \ref{rem:kR-perman} and \ref{rem:sR-perman} below):

\[
\xymatrix{
\mbox{sequential} \ar@{=>}[r]\ar@{=>}[rd] &  \mbox{$k$-space} \ar@{=>}[r] &  \mbox{$k_\IR$-space} \ar@{=>}[r]& \mbox{Ascoli}  \ar@{=>}[r] & \mbox{sequentially Ascoli}\\
&  \mbox{$s_\IR$-space} \ar@{=>}[ru] && }.
\]
A characterization of $k_\IR$-spaces is given in Theorem 3.3.5 of Banakh \cite{Banakh-Survey}. Numerous new characterizations of $k_\IR$-spaces and $s_\IR$-spaces are obtained by Gabriyelyan and Reznichenko \cite{GR-kR-sR}.

\smallskip

The aforementioned results and discussion motivate us to study Ascoli spaces, $k_\IR$-spaces and $s_\IR$-spaces, and relationships between these classes of spaces, in particular, for function spaces.

Now we describe the content of the article.
In Section \ref{sec:preliminary}, to make the article self-contained, we recall other notions and some of necessary results used in the article.
In Theorem \ref{t:Acoli-scf} and Theorem \ref{t:seq-Acoli-scf} of Section \ref{sec:Ascoli-spaces} we prove that $X$ is a  (sequentially) Ascoli space if, and only if,  for every compact space $K$ (resp., for a convergent sequence $\mathbf{s}$), each separately continuous $k$-continuous function $\Phi:X\times K\to \IR$ is continuous. We apply these characterizations to show that an open subspace of a (sequentially) Ascoli space is Ascoli (Proposition \ref{p:open-Ascoli}), and that the $\mu$-completion and the Dieudonn\'{e} completion of a (sequentially)  Ascoli space are (sequentially)  Ascoli (Theorem \ref{t:completion-Ascoli}). In Theorem \ref{t:characterization-Ascoli} we give cover-type characterizations of Ascoli spaces. In Proposition \ref{p:Ascoli-non-kR-pseudocompact} and Theorems \ref{t:Tych-embed-ps-Ascoli} and \ref{t:Tych-embed-ps-Ascoli-2} we suggest an easy method of construction of pseudocompact Ascoli spaces which are not $k_\IR$-spaces and show that each space $X$ can be closely embedded into such a space.

In Section \ref{sec:products-kR-sR} we show that analogous characterizations $k_\IR$-spaces and $s_\IR$-spaces but using only $k$-continuous or $s$-continuous functions are valid, see Theorem \ref{t:kR-prod-compact} and Theorem \ref{t:sR-prod-compact}. These characterizations of (sequentially) Ascoli spaces, $k_\IR$-spaces and $s_\IR$-spaces additionally motivate us to study functions on products $X \times Y$ in more details.

Essentially using a result of Whitehead \cite[Lemma~4]{Whitehead}, Cohen proved that {\em if $Z$ is a locally compact Hausdorff space, then $Z\times X$ is a $k$-space for every $k$-space $X$} (see also \cite[3.3.17]{Eng}). In Theorem 4.2 of \cite{Gabr-seq-Ascoli} it is shown that an analogous result is valid for Ascoli spaces: {\em the product of a locally compact space and an Ascoli space is an Ascoli space}. In Section \ref{sec:products-kR-sR} we show that analogous results hold true also for $k_\IR$-spaces (Theorem \ref{p:lc-kR=kR}) and for $s_\IR$-spaces (Theorem \ref{p:sR-product}). Using these theorems we characterize locally pseudocompact $k_\IR$-spaces (Theorem \ref{t:kR-product-mu}) and  locally pseudocompact $s_\IR$-spaces (Theorem \ref{t:lc-sR=sR}).

In the last Section \ref{sec:k-R-spaces} we show in Theorem \ref{t:lcs-Z-kR} that in the definition of $k_\IR$-spaces one can replace $k$-continuous {\em functions} on $X$  by $k$-continuous {\em linear maps} defined on some compact sets of measures.


\section{Preliminary results} \label{sec:preliminary}


Let $X$ be a Tychonoff space. A subset $A$ of $X$ is called {\em functionally bounded} if $f(A)$ is a bounded subset of $\IR$ for every $f\in C(X)$. We shall say that $X$ is {\em closely embedded into } a space $Y$ if there is an embedding $f:X\to Y$ whose image $f(X)$ is a closed subspace of $Y$. A base of the compact-open topology of $\CC(X)$ (resp., a base of the pointwise topology of $C_p(X)$) consists of the sets
\[
[K;\e]=\big\{f\in C(X):f(K)\subseteq (-\e,\e)\big\},
\]
where $K$ is a compact (resp., finite) subset of $X$ and $\e>0$. We denote by $\KK(X)$, $\mathcal{S}(X)$ and $\mathcal{C}(X)$ the family of all compact subsets of $X$, the family of all convergent sequences in $X$ with their limit points, and the family of all countable subsets of $X$.
We denote by $\mathbf{s}=\{\tfrac{1}{n}\}_{n\in\NN}\cup\{0\}$ the convergent sequence with the topology induced from $\IR$.

A space $X$ is called a {\em $\mu$-space} if every functionally bounded subset of $X$ has compact closure. We denote by $\beta X$, $\upsilon X$ and  $\mathcal{D} X$ the Stone-\v{C}ech compactification, the Hewitt realcompactification and the Dieudonn\'{e} completion of $X$, respectively. We denote by $\mu X$ the {\em $\mu$-completion} of $X$, i.e. $\mu X$  is the smallest $\mu$-subspace of $\beta X$ that contains $X$.
It is well-known that
\[
X\subseteq \mu X\subseteq \DD X \subseteq \upsilon X.
\]

Let  $f:X\to Y$ be a continuous mapping between spaces $X$ and $Y$. The {\em adjoint map} $f^{\#}:C_p(Y)\to C_p(X)$ of $f$ is defined by $f^{\#}(g)(x):=g\big(f(x)\big)$.
Following Karnik and Willard \cite{Kar-Wil}, a continuous mapping $p:X\to Y$ is called {\em $R$-quotient} ({\em real-quotient}) if $p$ is continuous, onto, and every function $\phi:Y\to\IR$ is continuous whenever the composition $\phi\circ p$ is continuous. Clearly, every quotient mapping as well as each surjective open mapping is $R$-quotient. The following characterization of $R$-quotient mappings is Proposition 0.4.10 of \cite{Arhangel}.
\begin{proposition}[\cite{Arhangel}] \label{p:R-quotient-characterization}
A continuous surjective mapping $p:X\to Y$ is $R$-quotient if, and only if, $p^{\#}\big(C_p(Y)\big)$ is a closed subspace of $C_p(X)$.
\end{proposition}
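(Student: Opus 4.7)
The plan is to introduce the auxiliary subspace
\[
A:=\{f\in C(X): f(x_1)=f(x_2) \text{ whenever } p(x_1)=p(x_2)\}
\]
of continuous functions that are constant on the fibers of $p$, and to establish two general facts: (a) $A$ is always closed in $C_p(X)$, and (b) $p^{\#}\big(C_p(Y)\big)$ is always a dense subset of $A$. The closedness in (a) is immediate: $A$ is the intersection, over all pairs $(x_1,x_2)$ with $p(x_1)=p(x_2)$, of the kernels of the continuous evaluation-difference functionals $f\mapsto f(x_1)-f(x_2)$ on $C_p(X)$. The inclusion $p^{\#}(C_p(Y))\subseteq A$ is obvious from the definition of $p^{\#}$.

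The density claim (b) is the main technical point. Given $f\in A$ and a basic pointwise neighborhood of $f$ determined by a finite set $F=\{x_1,\ldots,x_n\}\subseteq X$ and $\e>0$, the assignment $p(x_i)\mapsto f(x_i)$ is well-defined on the finite subset $p(F)\subseteq Y$ precisely because $f$ is constant on the fibers of $p$. Since $Y$ is Tychonoff, any prescription of values on a finite subset of $Y$ extends to some $g\in C(Y)$ (combine Urysohn-type functions separating each of the finitely many points of $p(F)$ from the union of the others). Then $p^{\#}(g)$ agrees with $f$ on $F$, proving density.

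Granted (a) and (b), both implications of the proposition are short. For the forward direction, assume $p$ is $R$-quotient. Given $f\in A$, surjectivity of $p$ together with the fiber-constancy of $f$ produces a well-defined set-theoretic $\phi:Y\to\IR$ with $\phi\circ p=f$, and the $R$-quotient property upgrades $\phi$ to an element of $C(Y)$; hence $f=p^{\#}(\phi)\in p^{\#}(C_p(Y))$. Thus $p^{\#}(C_p(Y))=A$, which is closed by (a). Conversely, if $p^{\#}(C_p(Y))$ is closed, the density from (b) forces $p^{\#}(C_p(Y))=A$; whenever $\phi:Y\to\IR$ satisfies $\phi\circ p\in C(X)$ we then have $\phi\circ p\in A$, so $\phi\circ p=g\circ p$ for some $g\in C(Y)$, and surjectivity of $p$ yields $\phi=g\in C(Y)$. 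The only non-bookkeeping step is the density in (b), which is precisely where the Tychonoff hypothesis on $Y$ enters.
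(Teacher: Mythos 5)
Your proof is correct: the identification of the closure of $p^{\#}\big(C_p(Y)\big)$ with the closed set $A$ of continuous functions constant on the fibers of $p$, via the finite-extension (density) argument and the fiber-factorization argument, is exactly the standard proof of this fact. The paper itself cites this as Proposition 0.4.10 of Arhangel'skii's book without reproving it, and your argument agrees with the one given there.
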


We shall use repeatedly the following assertion, see Theorem 4.7 of \cite{Oku-Par}, in which $\Id_Z:Z\to Z$ is the identity mapping.
\begin{theorem}[\cite{Oku-Par}] \label{t:rq-prod}
Let $p:X\to Y$ be an $R$-quotient mapping between Tychonoff spaces. Then for every locally compact space $Z$, the product mapping $p\times\Id_Z:X\times Z\to Y\times Z$ is $R$-quotient.
\end{theorem}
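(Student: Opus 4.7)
The plan is to use the exponential law for locally compact spaces in order to transfer the $R$-quotient property from real-valued maps on $X$ to maps on $X$ valued in the function space $\CC(Z)$. It is clear that $p\times\Id_Z$ is continuous and surjective, so only the defining ``real-quotient'' property needs verification. Fix a function $\phi: Y\times Z\to \IR$ whose composition $\psi := \phi\circ(p\times\Id_Z): X\times Z\to\IR$ is continuous. The goal is to prove that $\phi$ itself is continuous.

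First, I would invoke the classical exponential law: since $Z$ is locally compact, a function $g:W\times Z\to\IR$ is continuous if and only if its adjoint $\hat g: W\to \CC(Z)$, $w\mapsto g(w,\cdot)$, is continuous. Applied to $\psi$, this yields a continuous map $\hat\psi: X\to\CC(Z)$. Because $p$ is surjective, every $y\in Y$ has the form $p(x)$ for some $x\in X$, hence $\phi(y,\cdot) = \psi(x,\cdot) = \hat\psi(x)\in C(Z)$. Thus $\hat\phi: Y\to \CC(Z)$, $y\mapsto \phi(y,\cdot)$, is well-defined as a set map and satisfies $\hat\phi\circ p = \hat\psi$.

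Second, I would reduce the continuity of $\hat\phi$ to scalar tests. The space $\CC(Z)$ is a Hausdorff locally convex topological vector space, hence a Tychonoff space, so its topology is the initial topology determined by the family $C(\CC(Z),\IR)$; in particular, a map into $\CC(Z)$ is continuous iff its composition with every continuous real-valued function on $\CC(Z)$ is continuous. Pick any $f\in C(\CC(Z),\IR)$. Then $f\circ\hat\phi\circ p = f\circ\hat\psi$ is continuous as a composition of continuous maps, so the $R$-quotient property of $p$ forces $f\circ\hat\phi: Y\to\IR$ to be continuous. Letting $f$ range over $C(\CC(Z),\IR)$ gives that $\hat\phi$ is continuous, and applying the exponential law in reverse yields the continuity of $\phi$.

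The main point requiring care is the bookkeeping around the exponential law: one must check that $\hat\phi$ takes its values in $\CC(Z)$ rather than in the larger set of all maps $Z\to\IR$, and here the surjectivity of $p$ is what one uses. Once that is verified, the argument is a clean two-step chase in which the $R$-quotient hypothesis is applied not to $\phi$ directly but to each scalar test $f\circ\hat\phi$, with the Tychonoff structure of $\CC(Z)$ playing the bridging role between vector-valued continuity and real-valued continuity.
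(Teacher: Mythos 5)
Your proof is correct. Note first that the paper does not prove this statement at all: it is quoted verbatim from Okunev and P\'{a}ramo \cite{Oku-Par} (their Theorem 4.7), so there is no in-paper argument to compare against. Your exponential-law argument is a valid, self-contained proof and is the natural adaptation of the classical Whitehead--Cohen argument for $k$-spaces and quotient maps to the $R$-quotient setting. The two points that genuinely need care are both handled properly: (a) the adjoint $\hat\phi$ of $\phi$ lands in $C(Z)$ because $p$ is onto, so $\phi(y,\cdot)=\psi(x,\cdot)$ for any $x\in p^{-1}(y)$; and (b) the $R$-quotient hypothesis, as defined, only concerns \emph{real-valued} functions on $Y$, whereas you need continuity of the $\CC(Z)$-valued map $\hat\phi$ --- you bridge this correctly by observing that $\CC(Z)$ is a Hausdorff locally convex space, hence Tychonoff, hence carries the initial topology from $C(\CC(Z),\IR)$, so continuity of $\hat\phi$ reduces to continuity of each scalar composite $f\circ\hat\phi$, to which the $R$-quotient property of $p$ applies. (This step in effect proves the useful general fact that an $R$-quotient map is ``quotient'' with respect to maps into any Tychonoff target.) The exponential law is used in both directions, and local compactness of $Z$ is invoked exactly where it is needed, namely for the implication ``$\hat\phi$ continuous $\Rightarrow$ $\phi$ continuous'' via continuity of the evaluation map $\CC(Z)\times Z\to\IR$; the converse direction, applied to $\psi$, holds for arbitrary $Z$.
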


$R$-quotient mappings are important for us because of the following results which are considerably used in what follows.

\begin{theorem}[{\protect\cite[Theorem~6.1]{GR-kR-sR}}] \label{t:characterization-kR}
For a Tychonoff space $X$, the following assertions are equivalent:
\begin{enumerate}
\item[{\rm(i)}] $X$ is a $k_\IR$-space;
\item[{\rm(ii)}]  $X$ is an $R$-quotient image of some topological sum of compact spaces;
\item[{\rm(iii)}]  $X$ is an $R$-quotient  image of a $k_\IR$-space.
\end{enumerate}
\end{theorem}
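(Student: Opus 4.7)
The plan is to close the cycle (i) $\Rightarrow$ (ii) $\Rightarrow$ (iii) $\Rightarrow$ (i). The implication (ii) $\Rightarrow$ (iii) is immediate, (iii) $\Rightarrow$ (i) is a routine diagram chase through the definitions, and the core content lies in exhibiting a concrete $k_\IR$-presentation of $X$ as a sum of compacta.

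For (i) $\Rightarrow$ (ii), I would let $\KK(X)$ denote the family of all compact subsets of $X$ and set $Y=\bigsqcup_{K\in\KK(X)} K$. Define $p:Y\to X$ so that its restriction to each summand $K$ is the inclusion $K\hookrightarrow X$. Then $p$ is continuous on each clopen summand, hence continuous, and surjective because every singleton belongs to $\KK(X)$. To check that $p$ is $R$-quotient, suppose $\phi:X\to\IR$ is such that $\phi\circ p$ is continuous; then for every $K\in\KK(X)$ the restriction $\phi|_K=(\phi\circ p)|_K$ is continuous, so $\phi$ is $k$-continuous, and since $X$ is a $k_\IR$-space, $\phi$ is continuous.

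For (ii) $\Rightarrow$ (iii), I would simply observe that any topological sum of compact Hausdorff spaces is locally compact, in particular a $k$-space, and therefore a $k_\IR$-space. For (iii) $\Rightarrow$ (i), let $p:Y\to X$ be $R$-quotient with $Y$ a $k_\IR$-space, and let $\phi:X\to\IR$ be $k$-continuous. For any compact $L\subseteq Y$, the image $p(L)$ is compact in $X$, so $\phi|_{p(L)}$ is continuous, whence $(\phi\circ p)|_L=\phi|_{p(L)}\circ p|_L$ is continuous. Thus $\phi\circ p$ is $k$-continuous on the $k_\IR$-space $Y$, hence continuous, and the $R$-quotient property of $p$ yields continuity of $\phi$.

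There is no serious obstacle; the one conceptual choice is selecting the sum over all compact subsets as the canonical $k_\IR$-preimage in (i) $\Rightarrow$ (ii). Once that choice is made, every remaining step is an unwinding of the definitions of $k$-continuous, $k_\IR$-space, and $R$-quotient map.
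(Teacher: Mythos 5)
This theorem is not proved in the paper at all: it is quoted verbatim from \cite[Theorem~6.1]{GR-kR-sR} (a separate, submitted article), so there is no in-text argument to compare yours against. On its own merits your cycle is correct and complete: the topological sum $\bigoplus_{K\in\KK(X)}K$ with the tautological map is the right canonical witness for (i)$\Rightarrow$(ii) (continuity, surjectivity via singletons, and the $R$-quotient property via $k$-continuity of $\phi$ all check out), such a sum is locally compact and hence a $k_\IR$-space for (ii)$\Rightarrow$(iii), and your verification of (iii)$\Rightarrow$(i) — that a $k$-continuous $\phi$ on $X$ pulls back under $p$ to a $k$-continuous, hence continuous, function on the $k_\IR$-preimage, after which the $R$-quotient property returns continuity of $\phi$ — is exactly the standard unwinding of the definitions. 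I see no gap.
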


\begin{theorem}[{\protect\cite[Theorem~7.1]{GR-kR-sR}}] \label{t:characterization-sR}
For a Tychonoff space $X$, the following assertions are equivalent:
\begin{enumerate}
\item[{\rm(i)}] $X$ is an $s_\IR$-space;
\item[{\rm(ii)}]  $X$ is an $R$-quotient image  of a metrizable locally compact space;
\item[{\rm(iii)}]  $X$ is an $R$-quotient  image of an $s_\IR$-space.
\end{enumerate}
\end{theorem}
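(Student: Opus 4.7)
The plan is to prove the equivalences as the cycle $(\mathrm{ii})\Rightarrow(\mathrm{iii})\Rightarrow(\mathrm{i})\Rightarrow(\mathrm{ii})$. The only genuinely non-trivial step is the last, namely building a metrizable locally compact preimage out of the $s_\IR$-structure of $X$; the rest is bookkeeping once the definition of $R$-quotient is unwound.

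First, $(\mathrm{ii})\Rightarrow(\mathrm{iii})$ is immediate: every metrizable locally compact space is first countable and hence sequential, and any sequential space is trivially an $s_\IR$-space (every $s$-continuous function on a sequential space is continuous, since sequential closure equals topological closure).

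For $(\mathrm{iii})\Rightarrow(\mathrm{i})$, let $p\colon Y\to X$ be an $R$-quotient map from an $s_\IR$-space $Y$, and let $\phi\colon X\to\IR$ be $s$-continuous. By the definition of $R$-quotient it suffices to show that $\phi\circ p\colon Y\to\IR$ is continuous; since $Y$ is an $s_\IR$-space, it is enough to verify that $\phi\circ p$ is $s$-continuous. But if $y_n\to y$ in $Y$, then $p(y_n)\to p(y)$ in $X$ by continuity of $p$, and $s$-continuity of $\phi$ gives $\phi(p(y_n))\to \phi(p(y))$, as required.

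The substantive implication is $(\mathrm{i})\Rightarrow(\mathrm{ii})$. Form the topological sum
\[
Y:=\bigsqcup_{\mathbf{t}\in\mathcal{S}(X)}\mathbf{t},
\]
where $\mathcal{S}(X)$ is the family of all convergent sequences in $X$ together with their limits. Each summand $\mathbf{t}$ is either a finite discrete space or a copy of $\w+1$, hence compact and metrizable; a disjoint topological sum of such spaces is metrizable and locally compact. Let $p\colon Y\to X$ be the natural map sending each point to its image in $X$. It is continuous on every summand, hence continuous on $Y$, and it is surjective because each constant sequence at $x\in X$ lies in $\mathcal{S}(X)$. It remains to check that $p$ is $R$-quotient: suppose $\phi\colon X\to\IR$ is such that $\phi\circ p\colon Y\to\IR$ is continuous; we must show $\phi$ is continuous. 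Since $X$ is an $s_\IR$-space it suffices to show $\phi$ is $s$-continuous. Given any convergent sequence $x_n\to x$ in $X$, the set $\mathbf{t}=\{x_n\}\cup\{x\}$ belongs to $\mathcal{S}(X)$ and the restriction of $p$ to the corresponding summand is precisely the inclusion $\mathbf{t}\hookrightarrow X$; since $\phi\circ p$ is continuous on this summand, $\phi\!\restriction\!\mathbf{t}$ is continuous, so $\phi(x_n)\to\phi(x)$. Hence $\phi$ is $s$-continuous, completing the proof.

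The main obstacle I anticipate is purely one of choice of preimage: one must produce a single metrizable locally compact witness rather than a merely sequential one, and the sum-over-convergent-sequences construction is the natural gadget because it is rich enough to witness every convergence in $X$ while being manifestly metrizable and locally compact. Verifying that $p$ is genuinely $R$-quotient (rather than quotient) is where the $s_\IR$-hypothesis on $X$ is used decisively in the final step.
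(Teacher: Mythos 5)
Note that the paper does not actually prove this statement: it is imported verbatim as Theorem~7.1 of the cited companion paper [GR-kR-sR] and used as a black box, so there is no in-paper proof to compare against. Your argument is correct and is the natural one, exactly paralleling the analogous characterization of $k_\IR$-spaces via $R$-quotients of topological sums of compact spaces (Theorem~\ref{t:characterization-kR}): the cycle (ii)$\Rightarrow$(iii)$\Rightarrow$(i)$\Rightarrow$(ii) is sound, the witness $\bigsqcup_{\mathbf{t}\in\mathcal{S}(X)}\mathbf{t}$ is indeed metrizable and locally compact (each summand being finite discrete or a copy of $\w+1$), and the $R$-quotient property of the natural map is precisely where the $s_\IR$-hypothesis enters. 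The only point worth making explicit is the standard fact that a function is $s$-continuous (restriction to each convergent sequence with its limit is continuous) if and only if it preserves limits of convergent sequences, which you use silently in both (iii)$\Rightarrow$(i) and (i)$\Rightarrow$(ii); this holds because in a Hausdorff space every point of a convergent sequence other than its limit is isolated in that subspace.
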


\begin{proposition}[{\protect\cite[Proposition~4.4]{GR-kR-sR}}]  \label{p:noble:fc-prod}
Let $X=\prod_{\alpha\in \AAA}X_\alpha$ be a product of first countable spaces. If  $|\AAA|$ is not sequential, then  $X$ is an $s_\IR$-space.
Consequently, for every $n\in\omega$, the spaces $\IR^{\aleph_n}$ and $\omega_1^{\aleph_n}$ are $s_\IR$-spaces.
\end{proposition}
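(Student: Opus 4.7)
My strategy is to verify directly that every sequentially continuous $f : X \to \IR$ is continuous, reducing to the case where the factor spaces are replaced by convergent sequences---i.e.\ to $\mathbf{s}^\AAA$---for which the hypothesis ``$|\AAA|$ is non-sequential'' delivers $s_\IR$-ness as a known consequence of the Mazur--Ulam--Noble line of results (indeed this is essentially the definition of a non-sequential cardinal).

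Let $f : X \to \IR$ be $s$-continuous and fix $x_0 = (x_0^\alpha)_{\alpha} \in X$. Using the first countability of each $X_\alpha$, fix a decreasing countable neighborhood base $(U_n^\alpha)_{n \in \omega}$ at $x_0^\alpha$, pick points $p_n^\alpha \in U_n^\alpha$, and form continuous maps $\phi_\alpha : \mathbf{s} \to X_\alpha$ with $\phi_\alpha(0) = x_0^\alpha$ and $\phi_\alpha(1/n) = p_n^\alpha$. The product $\phi := \prod_\alpha \phi_\alpha : \mathbf{s}^\AAA \to X$ is continuous and sends the base point $\mathbf{0} := (0)_{\alpha}$ to $x_0$. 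Since $|\AAA|$ is non-sequential, $\mathbf{s}^\AAA$ is an $s_\IR$-space; hence the $s$-continuous composition $g := f \circ \phi$ is continuous, in particular at $\mathbf{0}$.

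To finish, I argue by contradiction: if $f$ were discontinuous at $x_0$ with threshold $\e > 0$, then for every finite $F \subseteq \AAA$ and every $n$ there would be a point $y(F, n)$ in the basic neighborhood $V(F, n) := \prod_{\alpha \in F} U_n^\alpha \times \prod_{\alpha \notin F} X_\alpha$ with $|f(y(F, n)) - f(x_0)| \geq \e$. The crux is to choose the witness points $p_n^\alpha$ so that a cofinal family of the $y(F, n)$'s factors through $\phi$, forcing $g$ to be discontinuous at $\mathbf{0}$---contradicting the previous paragraph. The ``consequently'' clause is then immediate: each $\aleph_n$ lies strictly below the first Ulam-measurable cardinal (hence is non-sequential), and both $\IR$ and $\omega_1$ are first countable.

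\textbf{Main obstacle.} The technical heart is the simultaneous choice of the $p_n^\alpha$: a single $\phi$ uses only countably many points per coordinate, whereas the family of pairs $(F, n)$ is a priori uncountable. The remedy is to replace the one-shot choice by a diagonal bookkeeping---enumerating the pairs $(F, n)$ and iteratively updating the countable sequences $(p_n^\alpha)_n$ so that every pair is absorbed---which is made feasible precisely by the first countability of the factors (only countably many ``depth parameters'' per coordinate are needed). This is the one place where first countability, as opposed to some weaker hypothesis, is essential.
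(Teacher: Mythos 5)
The paper does not reprove this proposition (it is imported from the companion paper \cite{GR-kR-sR}), so I can only judge your sketch on its own merits, and its central step has a genuine gap. The ``diagonal bookkeeping'' you defer to cannot be carried out as described: the index set of pairs $(F,n)$ has cardinality $|\AAA|$, which is uncountable, while each coordinate sequence $(p^\alpha_m)_m$ can absorb only countably many values of $X_\alpha$ (indeed, to converge to $x_0^\alpha$ it must eventually enter every basic neighborhood, so all but countably many of its terms are constrained). More fundamentally, the image of $\phi$ is a subproduct that is countable in every coordinate, whereas a witness $y(F,n)\in V(F,n)$ to the discontinuity of $f$ has completely arbitrary coordinates off the finite set $F$; there is no reason the discontinuity of $f$ at $x_0$ survives restriction to $\mathrm{Im}(\phi)$ for any single choice of $\phi$, and nothing in your argument forces it to. You also locate the role of first countability in the wrong place: it cannot rescue an enumeration of uncountably many witnesses, and your sketch never actually applies the non-sequentiality of $|\AAA|$ to anything except the one auxiliary space $\mathbf{s}^{\AAA}$ (whose $s_\IR$-property is itself an instance of the theorem being proved, so at best you are reducing to Mazur's theorem for $2^{\AAA}$ or $\mathbf{s}^{\AAA}$ --- a legitimate citation, but then the reduction is the entire content, and that is exactly what is missing).

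The standard repair uses the non-sequentiality hypothesis a second time, on $2^{\AAA}$, to tame the witnesses before any countable construction. Given a witness $y\in V(F,n)$ with $|f(y)-f(x_0)|\ge\e$, define $g_y:2^{\AAA}\to\IR$ by $g_y(A):=f(y_A)$, where $y_A$ agrees with $y$ on $A$ and with $x_0$ off $A$; since convergence in a product is coordinatewise, $g_y$ is sequentially continuous, hence continuous because $|\AAA|$ is not sequential, and continuity at $A=\AAA$ yields a finite $G$ with $|f(y_G)-f(y)|<\e/2$. Thus every basic neighborhood of $x_0$ contains a \emph{finitely supported} witness deviating by at least $\e/2$. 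Now a simple recursion --- take $y_1$ supported on $G_1$, then a finitely supported witness $y_2\in V(G_1,2)$ supported on $G_2$, then $y_3\in V(G_1\cup G_2,3)$, and so on --- produces points supported in the countable set $\bigcup_k G_k$; first countability of the factors (the decreasing bases $U^\alpha_k$) is what guarantees $y_k\to x_0$ coordinatewise, hence in $X$, while $|f(y_k)-f(x_0)|\ge\e/2$ contradicts sequential continuity of $f$ directly. Note that this argument never needs the witnesses to factor through a single map from $\mathbf{s}^{\AAA}$; the final contradiction is with $s$-continuity along one convergent sequence in $X$.
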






\section{Characterizations of Ascoli spaces} \label{sec:Ascoli-spaces}


Two characterizations of Ascoli spaces are given in Propositions 5.4 and 5.10 of \cite{BG}, sequentially Ascoli spaces are characterized in Theorem 2.7 of \cite{Gabr-seq-Ascoli}, and a Banach space characterization of (sequentially) Ascoli spaces is given in Theorem 2.5 of \cite{Gabr-Ascoli-Banach}. Below we give several new characterizations of (sequentially) Ascoli spaces.


\begin{theorem} \label{t:Acoli-scf}
A Tychonoff space $X$ is Ascoli if, and only if, for every compact space $K$, each separately continuous $k$-continuous function $\Phi:X\times K\to \IR$ is continuous.
\end{theorem}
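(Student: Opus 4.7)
The approach is the exponential correspondence between functions $\Phi\colon X\times K\to\IR$ and their transposes $\hat\Phi\colon K\to \IR^X$, $\hat\Phi(y):=\Phi(\cdot,y)$. The Ascoli property is precisely even continuity of compacta in $\CC(X)$, so on one side I would translate separate continuity plus $k$-continuity of $\Phi$ into continuity of $\hat\Phi$ as a map into $\CC(X)$ and then invoke even continuity; on the other side, given a compact $\KK\subseteq \CC(X)$, I would observe that the evaluation map is automatically separately continuous and (by the classical Arzel\`a--Ascoli theorem applied on each compact $Q\subseteq X$) $k$-continuous, and hence is forced to be continuous by hypothesis.

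\emph{Necessity.} Assume $X$ is Ascoli and let $\Phi\colon X\times K\to\IR$ be separately continuous and $k$-continuous with $K$ compact. Separate continuity gives $\hat\Phi(y)\in C(X)$ for each $y\in K$. To see that $\hat\Phi\colon K\to \CC(X)$ is continuous, fix a compact $Q\subseteq X$; then $Q\times K$ is compact in $X\times K$, so $\Phi|_{Q\times K}$ is continuous by $k$-continuity, and the classical exponential law (valid because $K$ is compact) shows that $y\mapsto \hat\Phi(y)|_Q$ is continuous from $K$ into the Banach space $C(Q)$. Since the compact-open topology on $\CC(X)$ is the initial topology with respect to the restriction maps to compacta $Q\subseteq X$, $\hat\Phi$ is continuous, and hence $\hat\Phi(K)\subseteq \CC(X)$ is compact. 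By the Ascoli property, evaluation $e\colon X\times \hat\Phi(K)\to\IR$ is continuous, and writing $\Phi = e\circ(\id_X\times\hat\Phi)$ yields continuity of $\Phi$.

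\emph{Sufficiency.} Conversely, let $\KK\subseteq \CC(X)$ be compact, and define $\Phi\colon X\times\KK\to\IR$ by $\Phi(x,f):=f(x)$. Even continuity of $\KK$ amounts to continuity of $\Phi$, so it suffices to verify the hypothesis with $K:=\KK$. Separate continuity is immediate, since each $f\in \KK$ lies in $C(X)$ and each $x\in X$ induces a continuous point-evaluation on $\CC(X)$. For $k$-continuity, every compact subset of $X\times\KK$ is contained in some product $Q\times\KK$ with $Q\subseteq X$ compact; the restrictions $\{f|_Q:f\in\KK\}$ form a compact subset of the Banach space $C(Q)$, hence an equicontinuous family by the Arzel\`a--Ascoli theorem on the compact space $Q$, so the restricted evaluation $Q\times\KK\to\IR$ is continuous. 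Applying the hypothesis gives continuity of $\Phi$, so $\KK$ is evenly continuous and $X$ is Ascoli.

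\emph{Where the work lies.} The only nontrivial step is the transfer between $k$-continuity of $\Phi$ on $X\times K$ and continuity of $\hat\Phi$ into $\CC(X)$; this rests on two observations that (a) compact subsets of $X\times K$ project to compact sets in $X$ and hence lie in some $Q\times K$, and (b) the classical exponential law applies to $\Phi|_{Q\times K}$ because $K$ is compact. Everything else is bookkeeping combining these facts with the definition of the Ascoli property and the standard Arzel\`a--Ascoli theorem on compact spaces.
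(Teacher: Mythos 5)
Your proof is correct and follows essentially the same route as the paper: in the necessity you pass to the transpose $\hat\Phi\colon K\to \CC(X)$, verify its continuity via the $k$-continuity of $\Phi$ on sets $Q\times K$, and apply even continuity of the compact image; in the sufficiency you test the hypothesis on the evaluation map $X\times\KK\to\IR$, establishing its $k$-continuity from compactness of the restricted family in $C(Q)$. The only difference is presentational: you cite the exponential law and the classical Arzel\`a--Ascoli theorem where the paper writes out the corresponding $\tfrac{\e}{2}$ covering arguments (and uses that compact spaces are Ascoli), so no further changes are needed.
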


\begin{proof}
Assume that $X$ is an Ascoli space, and let $\Phi:X\times K\to \IR$ be a separately continuous $k$-continuous function for some compact space $K$. Consider the following mapping $\tilde\Phi: K\to \CC(X)$ defined by $\tilde\Phi(f)(x):=\Phi(x,f)$. We claim that $\tilde\Phi$ is continuous. To this end, fix $f\in K$, and let $W:=\tilde\Phi(f)+[A;\e]$ be a standard neighborhood of $\tilde\Phi(f)$, where  $A$ is a compact subset of $X$ and $\e>0$. Since $\Phi$ is $k$-continuous, the restriction of $\Phi$ onto $A\times K$ is continuous. Therefore, for every $a\in A$, there are a neighborhood $U_a\subseteq A$ of $a$ and a neighborhood $V_a\subseteq K$ of $f$ such that $|\Phi(x,g)-\Phi(a,f)|<\tfrac{\e}{2}$ for all $x\in U_a$ and $g\in V_a$. Since $A$ is compact, there are $a_1,\dots,a_n\in A$ such that $A=\bigcup_{i=1}^n U_{a_i}$. Set $V:=\bigcap_{i=1}^n V_{a_i}$, then $V\subseteq K$ is a neighborhood of $f$. Now, fix an arbitrary $g\in V$. For every $x\in A$, choose $1\leq i\leq n$ such that $x\in U_{a_i}$, and then
\[
|\tilde\Phi(g)(x)-\tilde\Phi(f)(x)|\leq |\Phi(x,g)-\Phi(a_i,f)|+|\Phi(a_i,f)-\Phi(x,f)|<\e,
\]
which means that  $\tilde\Phi(g)\in W$ for every $g\in V$. Thus $\tilde \Phi$ is continuous at $f$.

Fix arbitrarily $x_0\in X$ and $f_0\in K$. To show that $\Phi$  is continuous at $(x_0,f_0)$, let $\e>0$. Set $Q:=\tilde\Phi(K)$. Then, by the claim, $Q$ is a compact subset of $\CC(X)$. As $\CC(X)$ is Ascoli, $Q$ is equicontinuous.  Choose a neighborhood $U\subseteq X$ of $x_0$ such that
\[
|\Phi(x,f)-\Phi(x_0,f)|=|\tilde\Phi(f)(x)-\tilde\Phi(f)(x_0)|<\tfrac{\e}{2} \quad \mbox{ for every } \; x\in U\; \mbox{ and each } \; f\in K.
\]
Since $\Phi$ is separately continuous, there is a neighborhood $O\subseteq K$ of $f_0$ such that $|\Phi(x_0,f_0)-\Phi(x_0,f)|<\tfrac{\e}{2}$ for every $f\in O$. Therefore, for each $(x,f)\in U\times O$, we obtain
\[
|\Phi(x,f)-\Phi(x_0,f_0)|\leq |\Phi(x,f)-\Phi(x_0,f)|+|\Phi(x_0,f)-\Phi(x_0,f_0)|<\e.
\]
Thus $\Phi$ is continuous at $(x_0,f_0)$.
\smallskip

Conversely, assume that for every compact space $K$, each separately continuous $k$-continuous function $\Phi:X\times K\to \IR$ is continuous. To prove that $X$ is Ascoli, fix a compact subset $K$ of $\CC(X)$. To show that $K$ is equicontinuous, define $\Phi:X\times K\to \IR$ by $\Phi(x,f):=f(x)$. It is easy to see that the function $\Phi$ is separately continuous. We claim that $\Phi$ is $k$-continuous. To this end, let $A$ be compact subset of $X$. Denote by $R:\CC(X)\to \CC(A)$ the restriction mapping. Since $R$ is continuous, $R(K)$ is a compact subset of $\CC(A)$. As $A$ is Ascoli, the mapping $A\times R(K)\to \IR$ defined by $(x,R(f)):=f(x)=\Phi(x,f)$ is evenly continuous. Hence the mapping $\Phi:A\times K\to\IR$ is continuous. This proves the claim. Thus, by assumption, the function $\Phi$ is continuous.

We show that $K$ is equicontinuous at an arbitrary point $x_0\in X$. Let $\e>0$. For every $f\in K$, choose a neighborhood $V_f\subseteq K$ of $f$ and a neighborhood $U_f\subseteq X$ of $x_0$ such that
\[
|\Phi(x,g)-\Phi(x_0,f)|<\tfrac{\e}{2} \quad \mbox{ for all } x\in U_f \; \mbox{ and }\; g\in V_f.
\]
Since $K$ is compact, there are $f_1,\dots,f_m\in K$ such that $K=\bigcup_{i=1}^m V_{f_i}$. Set $U:= \bigcap_{i=1}^m U_{f_i}$. Then $U$ is a neighborhood of $x_0$. For every $g\in K$, choose $1\leq i\leq m$ such that $g\in V_{f_i}$, and then, for every $x\in U$,  we obtain
\[
\begin{aligned}
|g(x)-g(x_0)| & \leq |g(x)-f_i(x_0)|+|f_i(x_0)-g(x_0)|\\
& =|\Phi(x,g)-\Phi(x_0,f_i)| + |\Phi(x_0,f_i)-\Phi(x_0,g)|<\e.
\end{aligned}
\]
Thus $K$ is equicontinuous at $x_0$.\qed
\end{proof}

Taking into account that a continuous image of a convergent sequence is a convergent sequence, the following characterization of sequentially Ascoli spaces can be proved by a minor modification of the proof of Theorem \ref{t:Acoli-scf} (in fact, only in the proof of the sufficiency we should identify the chosen sequence $K$ with $\mathbf{s}$).

\begin{theorem} \label{t:seq-Acoli-scf}
A Tychonoff space $X$ is sequentially Ascoli if, and only if, each separately continuous $k$-continuous function $\Phi:X\times \mathbf{s}\to \IR$ is continuous.
\end{theorem}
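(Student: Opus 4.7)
The plan is to mimic the proof of Theorem \ref{t:Acoli-scf} with the compact space $K$ replaced throughout by the convergent sequence $\mathbf{s}$, using the key observation that the continuous image of a convergent sequence is again a convergent sequence (with its limit point).

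For the necessity direction, given a separately continuous $k$-continuous function $\Phi:X\times \mathbf{s}\to\IR$, I would define $\tilde\Phi:\mathbf{s}\to\CC(X)$ by $\tilde\Phi(f)(x):=\Phi(x,f)$ and repeat verbatim the argument of Theorem \ref{t:Acoli-scf} showing $\tilde\Phi$ is continuous; this step only uses compactness of $\mathbf{s}$ (to extract a finite subcover $\{U_{a_i}\}$ of a compact $A\subseteq X$), together with separate and $k$-continuity of $\Phi$, so it goes through unchanged. The sole place where the argument must be adjusted is the invocation of the Ascoli property: instead of using that $Q:=\tilde\Phi(\mathbf{s})$ is compact in $\CC(X)$ and invoking even continuity, one observes that $Q$ is the continuous image of a convergent sequence, hence is itself a convergent sequence in $\CC(X)$, and then invokes the sequentially Ascoli property of $X$ to conclude $Q$ is equicontinuous. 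The remaining continuity of $\Phi$ at an arbitrary $(x_0,f_0)$ follows by the same triangle-inequality estimate combining equicontinuity of $Q$ at $x_0$ with separate continuity of $\Phi$ in the second variable.

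For the sufficiency direction, given a convergent sequence $K\subseteq \CC(X)$, I would identify $K$ together with its limit with $\mathbf{s}$ and define $\Phi:X\times K\to\IR$ by $\Phi(x,f):=f(x)$. Separate continuity is immediate. For $k$-continuity on a compact $A\subseteq X$, the restriction map $R:\CC(X)\to\CC(A)$ sends $K$ to a compact subset of $\CC(A)$, which is equicontinuous because the compact space $A$ is Ascoli, so evaluation is continuous on $A\times K$. By the hypothesis of the theorem, $\Phi$ is then continuous on $X\times K$, and the final equicontinuity argument at an arbitrary $x_0\in X$ transfers verbatim from the proof of Theorem \ref{t:Acoli-scf}.

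I do not expect a substantial obstacle; the proof is essentially transcribing the previous argument. The one delicate point to verify is that $Q=\tilde\Phi(\mathbf{s})$ really is a convergent sequence in $\CC(X)$ rather than just a compact set, so that the sequentially Ascoli hypothesis genuinely applies; this is immediate from the continuity of $\tilde\Phi$. A minor cosmetic caveat in the sufficiency direction is that the sequence $K$ may be eventually constant and thus not literally homeomorphic to $\mathbf{s}$, but the same argument applies to $K$ directly without any identification.
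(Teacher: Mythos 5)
Your proposal is correct and is essentially the paper's own argument: the paper proves this theorem precisely by the indicated minor modification of Theorem \ref{t:Acoli-scf}, using that a continuous image of a convergent sequence is a convergent sequence and identifying the chosen sequence $K$ with $\mathbf{s}$ in the sufficiency part. Your handling of the eventually constant case (defining $\Phi$ on $X\times\mathbf{s}$ via the parametrizing map rather than requiring a homeomorphism) is the right way to make that identification precise.
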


Proposition 2.16 of \cite{Gabr-seq-Ascoli} states that if $p:X\to Y$ is an open surjective map and $X$ is (sequentially) Ascoli, then so is $Y$. Below we generalize this assertion.

\begin{proposition} \label{p:Ascoli-R-quotient}
Let $p:X\to Y$ be an $R$-quotient mapping  between  spaces $X$ and $Y$. If $X$ is a (sequentially) Ascoli space, then so is $Y$.
\end{proposition}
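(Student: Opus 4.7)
The plan is to exploit the characterizations provided by Theorem \ref{t:Acoli-scf} (resp.\ Theorem \ref{t:seq-Acoli-scf}) together with the stability of $R$-quotient maps under multiplication by a locally compact factor (Theorem \ref{t:rq-prod}). Concretely, to show $Y$ is (sequentially) Ascoli, take an arbitrary compact space $K$ (resp.\ $K=\mathbf{s}$) and a separately continuous $k$-continuous function $\Psi:Y\times K\to\IR$; the goal is to prove that $\Psi$ is continuous. Form the pullback
\[
\Phi:=\Psi\circ(p\times\id_K)\colon X\times K\longrightarrow\IR.
\]

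First I would verify that $\Phi$ is separately continuous and $k$-continuous. Separate continuity is routine: for fixed $x\in X$ the slice $\Phi(x,\cdot)=\Psi(p(x),\cdot)$ is continuous because $\Psi$ is separately continuous, and for fixed $t\in K$ the slice $\Phi(\cdot,t)=\Psi(\cdot,t)\circ p$ is continuous as a composition of continuous maps. For $k$-continuity, let $C\subseteq X$ be compact; then $p(C)$ is compact in $Y$, and since $\Psi$ is $k$-continuous, its restriction to $p(C)\times K$ is continuous, hence so is the restriction of $\Phi$ to $C\times K$.

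Since $X$ is (sequentially) Ascoli, the characterization in Theorem \ref{t:Acoli-scf} (resp.\ Theorem \ref{t:seq-Acoli-scf}) yields that $\Phi$ is continuous on $X\times K$. Now comes the crucial step: because $K$ is compact (in particular locally compact), Theorem \ref{t:rq-prod} ensures that $p\times\id_K\colon X\times K\to Y\times K$ is itself $R$-quotient. Since the composition $\Psi\circ(p\times\id_K)=\Phi$ is continuous, the very definition of an $R$-quotient map forces $\Psi$ to be continuous on $Y\times K$. Applying Theorem \ref{t:Acoli-scf} (or Theorem \ref{t:seq-Acoli-scf}) in the opposite direction concludes that $Y$ is (sequentially) Ascoli.

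The only potential obstacle is making sure the transfer of continuity through $p\times\id_K$ is legitimate, and this is handled entirely by Theorem \ref{t:rq-prod} applied to the compact, hence locally compact, factor $K$ (and equally to $\mathbf{s}$ in the sequential case). Everything else is formal, so the argument is short once the appropriate characterization of (sequentially) Ascoli spaces is in hand.
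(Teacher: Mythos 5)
Your proof is correct and follows essentially the same route as the paper: pull the function back along $p\times\id_K$, use Theorem \ref{t:rq-prod} to see that this product map is $R$-quotient, apply the characterization of (sequentially) Ascoli spaces from Theorem \ref{t:Acoli-scf} (resp.\ Theorem \ref{t:seq-Acoli-scf}) on the $X$ side, and transfer continuity back via the $R$-quotient property. The only difference is that you spell out the (routine) verification of separate continuity and $k$-continuity of the pullback, which the paper dismisses as clear.
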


\begin{proof}
To prove that $Y$ is (sequentially) Ascoli, let $K$ be a compact space (resp., $K=\mathbf{s}$), and let $\Phi:X\times K\to\IR$ be a separately continuous $k$-continuous function. By Theorem \ref{t:Acoli-scf} (or Theorem \ref{t:seq-Acoli-scf}), we have to show that $\Phi$ is continuous. It follows from Theorem \ref{t:rq-prod} that the product mapping $p\times\Id_K:X\times K\to Y\times K$ is $R$-quotient. It is clear that the function $\Phi\circ (p\times\Id_K):X\times K\to\IR$ is also separately continuous and $k$-continuous. Since $X$ is a (sequentially) Ascoli space, it follows from Theorem \ref{t:Acoli-scf} (or Theorem \ref{t:seq-Acoli-scf}) that the function $\Phi\circ (p\times\Id_K)$ is continuous. Since the mapping $p\times\Id_K$ is $R$-quotient, we obtain that $\Phi$ is continuous. \qed
\end{proof}

In Theorem of \cite{Blasco-78}, Blasco proved that an open subspace of a $k_\IR$-space is a $k_\IR$-space, too. Analogously, we proved in Proposition 7.6 of \cite{GR-kR-sR} that  an open subspace of an $s_\IR$-space is an $s_\IR$-space. As an application of Theorem \ref{t:Acoli-scf} and Theorem \ref{t:seq-Acoli-scf}, we show that a similar result is valid also for (sequentially) Ascoli spaces.
\begin{proposition} \label{p:open-Ascoli}
An open subspace $U$ of a (sequentially) Ascoli space $X$ is (sequentially) Ascoli.
\end{proposition}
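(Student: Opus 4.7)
The plan is to apply Theorem~\ref{t:Acoli-scf} (in the Ascoli case) or Theorem~\ref{t:seq-Acoli-scf} (in the sequentially Ascoli case): fix a compact space $K$ (resp.\ $K=\mathbf{s}$) and a separately continuous $k$-continuous function $\Phi:U\times K\to\IR$, and show that $\Phi$ is continuous. Fix an arbitrary $(x_0,f_0)\in U\times K$; since $X$ is Tychonoff and $U$ is open, pick an open set $V\subseteq X$ with $x_0\in V$ and $\overline{V}\subseteq U$, and by Urysohn's lemma pick a continuous $\lambda:X\to[0,1]$ with $\lambda(x_0)=1$ and $\lambda\equiv 0$ on $X\setminus V$.

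The key step is to extend the localized function $\lambda\cdot\Phi$ by zero to the whole of $X\times K$. Define
\[
\tilde\Phi(x,f):=\begin{cases}\lambda(x)\,\Phi(x,f), & x\in U,\\ 0, & x\in X\setminus U.\end{cases}
\]
I would then verify, in turn, that $\tilde\Phi$ is (a) separately continuous and (b) $k$-continuous on $X\times K$. For separate continuity in $x$, note that on $U$ the function is the product of two continuous functions, while every $y\in X\setminus U$ has a neighborhood disjoint from $\overline{V}$ (since $\overline{V}\subseteq U$) on which $\tilde\Phi(\cdot,f)\equiv 0$; separate continuity in $f$ is immediate since $\lambda(x)$ is a constant in $f$ and $\Phi(x,\cdot)$ is continuous. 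For $k$-continuity, given a compact $A\subseteq X$, write $A=(A\cap\overline{V})\cup(A\setminus V)$: both pieces are closed in $A$, and on $(A\setminus V)\times K$ the function $\tilde\Phi$ vanishes, while $A\cap\overline{V}$ is a compact subset of $U$ on which $\Phi$ is continuous (by the $k$-continuity of $\Phi$ on $U\times K$), so $\tilde\Phi=\lambda\cdot\Phi$ is continuous on $(A\cap\overline{V})\times K$; the two pieces agree on the overlap (where $\lambda=0$), so the pasting lemma yields continuity on $A\times K$.

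Since $X$ is (sequentially) Ascoli, Theorem~\ref{t:Acoli-scf} (resp.\ Theorem~\ref{t:seq-Acoli-scf}) makes $\tilde\Phi:X\times K\to\IR$ continuous. In particular $\tilde\Phi$ is continuous at $(x_0,f_0)$; because $\lambda(x_0)=1>1/2$, there is an open neighborhood $O\subseteq V$ of $x_0$ with $\lambda>1/2$ on $O$, and on $O\times K\subseteq U\times K$ we have $\Phi=\tilde\Phi/\lambda$, a quotient of continuous functions with non-vanishing denominator. Hence $\Phi$ is continuous at $(x_0,f_0)$, and since $(x_0,f_0)\in U\times K$ was arbitrary, $\Phi$ is continuous on $U\times K$. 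Applying Theorem~\ref{t:Acoli-scf} (or Theorem~\ref{t:seq-Acoli-scf}) in the reverse direction concludes that $U$ is (sequentially) Ascoli.

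The main obstacle is the $k$-continuity verification of $\tilde\Phi$: one cannot simply invoke continuity of $\lambda\cdot\Phi$ because $\Phi$ is undefined on $A\setminus U$, so the pasting argument along the closed decomposition $A=(A\cap\overline{V})\cup(A\setminus V)$ is essential, and this is precisely where the condition $\overline{V}\subseteq U$ (rather than merely $V\subseteq U$) is used.
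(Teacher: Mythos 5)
Your proof is correct and follows essentially the same route as the paper's: localize $\Phi$ by a bump function vanishing off a set whose closure lies in $U$, extend by zero to $X\times K$, check separate and $k$-continuity via the closed decomposition along $\overline{V}$ and $X\setminus V$, and invoke Theorem~\ref{t:Acoli-scf} (resp.\ Theorem~\ref{t:seq-Acoli-scf}) on $X$; the paper merely phrases this as a contradiction argument at a single discontinuity point, using a bump equal to $1$ on a neighborhood instead of dividing by $\lambda$ where $\lambda>1/2$. The only quibble is that the existence of $\lambda$ separating $x_0$ from $X\setminus V$ is complete regularity of the Tychonoff space $X$ rather than Urysohn's lemma, but this does not affect the argument.
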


\begin{proof}
Suppose for a contradiction that $U$ is not  (sequentially) Ascoli. Therefore, by Theorem \ref{t:Acoli-scf} (resp., Theorem \ref{t:seq-Acoli-scf}), there are a compact space $K$ (or $K=\mathbf{s}$) and a separately continuous $k$-continuous function $f:U\times K\to \IR$ which is  discontinuous at some point $(x_0,t_0)\in U\times K$. Choose a functionally closed neighborhood $V$ of $x_0$ and a functionally open neighborhood $W$ of $x_0$ such that $V\subseteq W\subseteq \overline{W}\subseteq U$. By Theorem 1.5.13 of \cite{Eng}, choose a continuous function $g:X\to[0,1]$ such that $g^{-1}(1)=V$ and $g^{-1}(0)=X\SM W$. Define a function $h:X\times K\to \IR$ by
\[
h(x,t):=\left\{
\begin{aligned}
f(x,t)\cdot g(x), & \mbox{ if } \; (x,t)\in W\times K,\\
0, &  \mbox{ if } \; (x,t)\in (X\SM W)\times K.
\end{aligned}
\right.
\]
Clearly, $h$ is separately continuous. To show that $h$ is $k$-continuous, let $C$ be a compact subset of $X$, and let $(x,t)\in C\times K$. If $(x,t)\not\in \overline{W}\times K$, then $h$ is continuous at $(x,t)$ because $h=0$ on some neighborhood of $(x,t)$. Assume that $(x,t)\in \overline{W}\times K$. Since $U$ is open and $\overline{W}\subseteq U$, $h$ is continuous at $(x,t)$  as the product of continuous functions $f{\restriction}_{C\times K}$ and $g{\restriction}_C$. Thus $h$ is $k$-continuous.

On the other hand, $h$ is discontinuous at $(x_0,t_0)$ since $h(x,t)=f(x,t)$ on $V\times K$. Therefore, by Theorem \ref{t:Acoli-scf} (resp., Theorem \ref{t:seq-Acoli-scf}), the space $X$ is not (sequentially) Ascoli. This contradiction finishes the proof.\qed
\end{proof}

In the following theorem we provide cover-type characterizations of Ascoli spaces.

\begin{theorem} \label{t:characterization-Ascoli}
For a space $X$, the following assertions are equivalent:
\begin{enumerate}
\item[{\rm(i)}] $X$ is an Acoli space;
\item[{\rm(ii)}] for every {\rm(}the same open{\rm)} subset $A$ of $X$ and each point $x\in \overline{A}$ there is an Ascoli subspace $B$ of $X$ such that $x\in B\cap \overline{A\cap B}$;
\item[{\rm(iii)}] there is a family $\mathcal{M}$ of Ascoli subspaces of  $X$ such that $X$ is strongly functionally generated by $\mathcal{M}$.
\end{enumerate}
\end{theorem}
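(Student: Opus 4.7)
The plan is to establish the cycle (i)~$\Rightarrow$~(iii)~$\Rightarrow$~(ii)~$\Rightarrow$~(open version of (ii))~$\Rightarrow$~(i), since the first three links are essentially tautologies and only the last one requires work. For (i)~$\Rightarrow$~(iii) I would take $\mathcal{M}=\{X\}$, which strongly functionally generates $X$ trivially and consists of an Ascoli space by hypothesis. For (iii)~$\Rightarrow$~(ii) I would unfold the definition of strongly functionally generated: for every $A\subseteq X$ and every $x\in\overline{A}$ some $M\in\mathcal{M}$ satisfies $x\in M$ and $x\in\overline{A\cap M}$, and then setting $B:=M$ proves (ii). The passage from (ii) to the ``open $A$'' version is immediate.

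The only nontrivial step is (open ii)~$\Rightarrow$~(i), which I would prove by contradiction. Suppose $X$ is not Ascoli. Then there is a compact set $\KK\subseteq\CC(X)$ and a point $x_0\in X$ at which $\KK$ fails to be equicontinuous, so there exists $\e>0$ such that every neighbourhood of $x_0$ contains some $x$ with $|f(x)-f(x_0)|>\e$ for some $f\in\KK$. The key idea is to localise this failure into a single open set, namely
\[
A:=\bigcup_{f\in\KK}\bigl\{x\in X:|f(x)-f(x_0)|>\tfrac{\e}{2}\bigr\},
\]
which is open because each set in the union is the preimage of an open subset of $\IR$ under the continuous $f$. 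By the choice of $x_0$ and $\e$, we have $x_0\in\overline{A}$.

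Now I would apply the open version of (ii) to produce an Ascoli subspace $B\subseteq X$ with $x_0\in B\cap\overline{A\cap B}$. The restriction map $R\colon\CC(X)\to\CC(B)$ is continuous, so $R(\KK)$ is compact in $\CC(B)$; since $B$ is Ascoli, $R(\KK)$ is equicontinuous at $x_0$, yielding a neighbourhood $V\subseteq B$ of $x_0$ on which $|f(y)-f(x_0)|\leq\tfrac{\e}{2}$ for every $y\in V$ and every $f\in\KK$. But $x_0\in\overline{A\cap B}$ forces some $y\in V\cap A\cap B$, and by the definition of $A$ such a $y$ admits an $f\in\KK$ with $|f(y)-f(x_0)|>\tfrac{\e}{2}$, a contradiction.

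The principal obstacle is choosing the right witness set $A$ in the argument for (open ii)~$\Rightarrow$~(i); once $A$ is taken to be the union above, the proof reduces to the routine observations that restriction to $B$ sends compact sets in $\CC(X)$ to compact sets in $\CC(B)$ and that Ascoli subspaces transfer equicontinuity at points lying in them. The equivalences (ii)~$\Leftrightarrow$~(iii) then provide nothing beyond bookkeeping, so no additional machinery is needed.
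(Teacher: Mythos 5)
Your cycle breaks at (iii)$\Rightarrow$(ii). ``$X$ is strongly functionally generated by $\mathcal{M}$'' is not the closure statement you unfold it into; by definition it means that every function $f:X\to\IR$ whose restriction to each $M\in\mathcal{M}$ is continuous is itself continuous (equivalently: every discontinuous $f$ restricts to a discontinuous function on some $M\in\mathcal{M}$). This is a statement about real-valued functions, not about closure points, and the two are genuinely different: a $k_\IR$-space that is not a $k$-space (e.g.\ Michael's example from \cite{Mi73}) is strongly functionally generated by the family of its compact subspaces, yet it contains a non-closed set $A$ such that $A\cap K$ is closed in $K$ (hence closed in $X$) for every compact $K$, so no point of $\overline{A}\setminus A$ lies in $\overline{A\cap K}$ for any compact $K$. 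Thus ``set $B:=M$'' does not prove (iii)$\Rightarrow$(ii); the implication is true, but it has to pass through (i), and that passage is the only nontrivial content of (iii). The paper's route is: the natural map $I:\bigoplus\mathcal{M}\to X$ is $R$-quotient precisely because $\mathcal{M}$ strongly functionally generates $X$; the topological sum of Ascoli spaces is Ascoli; and $R$-quotient maps preserve the Ascoli property (Proposition \ref{p:Ascoli-R-quotient}, which in turn rests on Theorem \ref{t:Acoli-scf} and the Okunev--P\'{a}ramo theorem on products of $R$-quotient maps). None of that is bookkeeping, so your proposal is missing the real argument for one of the three implications.

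The rest of your argument is fine. (i)$\Rightarrow$(iii) with $\mathcal{M}=\{X\}$ and (i)$\Rightarrow$(ii) with $B:=X$ are as in the paper, and your proof of (open~ii)$\Rightarrow$(i) is correct and essentially the paper's, with a marginally cleaner witness set: you take $A=\bigcup_{f\in\KK}\{x\in X:|f(x)-f(x_0)|>\e/2\}$, whereas the paper indexes the offending functions by neighbourhoods $U$ of the bad point and takes the union of the corresponding sets $W_{f_U}\subseteq U$. Both choices give an open $A$ with $x_0\in\overline{A}\setminus A$, and both reach the same contradiction via equicontinuity of the restricted compact set on the Ascoli subspace $B$.
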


\begin{proof}
(i)$\Ra$(ii) is clear (just set $B:=X$).

(ii)$\Ra$(i) Suppose for a contradiction that $X$ is not an Ascoli space. Then there exists a compact set $K$ in $\CC(X)$ which is not equicontinuous at some point $z\in X$. Therefore there is $\e_0 >0$ such that for every open neighborhood $U$ of $z$, there exists a function $f_U\in K$ for which the open set $W_{f_U} :=\{ x\in U: | f_U (x)- f_U(z)|> \e_0\}$ is not empty (note that $z\not\in W_{f_U} \subseteq U$). Set
\[
A:= \bigcup\{ W_{f_U}: U \mbox{ is an open neighborhood of } z\}.
\]
Then $A$ is an {\em open} subset of $X$ such that $z\in\overline{A}\setminus A$.
By (ii), choose an Ascoli subspace $B$ of $X$ such that $z\in B\cap \overline{A\cap B}$.
For every $b\in A\cap B$, choose an open neighborhood $U_b$ of $z$ such that $b\in W_{f_{U_b}}$ and, therefore,
\begin{equation} \label{equ:characterization-Ascoli-1}
| f_{U_b} (b)- f_{U_b}(z)|> \e_0 \quad (\mbox{for all } b\in A\cap B).
\end{equation}
Denote by $p$ the restriction map $p: \CC(X)\to \CC(B),\, p(f)=f{\restriction}_{B}$. Then $p(K)$ is a compact subset of the space $\CC(B)$. Since $B$ is Ascoli, $p(K)$ is equicontinuous. In particular, the family $\{ p(f_{U_b}): b\in A\cap B\}\subseteq p(K)$ is equicontinuous at $z\in B$ and, therefore, there is an open neighborhood $V\subseteq X$ of $z$ such that
\[
\big| f_{U_b} (x) - f_{U_b}(z) \big| <\tfrac{\e_0}{2} \; \; \mbox{ for all } \, b\in A\cap B \, \mbox{ and each } \, x\in V\cap B.
\]
Since $z\in \overline{A\cap B}$, for every $b\in V\cap A\cap B$, we obtain
$
\big| f_{U_b} (b) - f_{U_b}(z) \big| <\frac{\e_0}{2}.
$
But this contradicts (\ref{equ:characterization-Ascoli-1}). Thus $X$ is an Ascoli space.
\smallskip

(i)$\Ra$(iii) It suffices to set $\mathcal{M}=\{X\}$.
\smallskip

(iii)$\Ra$(i) Consider the natural mapping $I:\bigoplus \mathcal{M}\to X$, defined by $I(x)=x$ for $x\in M\in \mathcal{M}$. We claim that $I$ is $R$-quotient. Indeed, it is clear that $I$ is continuous and surjective. Now, let $f:X\to\IR$ be a function such that $f\circ I$ is continuous. Assuming that $f$ is discontinuous we could find $M\in\mathcal{M}$ such that $f{\restriction}_M$ is discontinuous, too. Since $f{\restriction}_M=f\circ I{\restriction}_M$ we obtain a contradiction. Thus $f$ is continuous, and hence $I$ is $R$-quotient, as desired.

By Proposition 5.2(2) of \cite{BG}, the space  $\bigoplus\mathcal{M}$ is Ascoli. Thus, by Proposition \ref{p:Ascoli-R-quotient}, $X$ is an Ascoli space.\qed
\end{proof}
Note that the implication (ii)$\Ra$(i) in Theorem \ref{t:characterization-Ascoli} generalizes Theorem 2.5 of \cite{Gabr-B1}.
Using Theorem \ref{t:characterization-Ascoli} we give below  a very short proof of Proposition 5.10 of \cite{BG}.
\begin{corollary}[\cite{BG}] \label{c:dense-Ascoli}
A Tychonoff space $X$ is Ascoli if, and only if, each point $x\in X$ is contained in a dense Ascoli subspace $B_x$ of $X$.
\end{corollary}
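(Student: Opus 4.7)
The forward direction is trivial: if $X$ is Ascoli, then for each $x \in X$ we may take $B_x := X$, which is dense in itself and contains $x$.

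For the converse, I plan to verify condition (ii) of Theorem \ref{t:characterization-Ascoli} in its ``open'' formulation. Let $A$ be an open subset of $X$ and let $x \in \overline{A}$. By hypothesis, there exists a dense Ascoli subspace $B_x \subseteq X$ with $x \in B_x$. Since $B_x$ is dense in $X$ and $A$ is open, a standard fact yields that $A \cap B_x$ is dense in $A$, i.e.\ $\overline{A \cap B_x} \supseteq A$, and therefore
\[
\overline{A \cap B_x} \supseteq \overline{A} \ni x.
\]
Combining this with $x \in B_x$ gives $x \in B_x \cap \overline{A \cap B_x}$, so setting $B := B_x$ verifies condition (ii) of Theorem \ref{t:characterization-Ascoli}. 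Hence $X$ is an Ascoli space.

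There is no real obstacle here; the entire content of the argument is the elementary observation that the intersection of a dense set with an open set $A$ is dense in $A$, which transfers the membership $x \in \overline{A}$ into membership in $\overline{A \cap B_x}$. The work has already been done in Theorem \ref{t:characterization-Ascoli}.
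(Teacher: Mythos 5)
Your proof is correct and follows exactly the paper's own argument: both directions are handled the same way, with the converse reduced to condition (ii) of Theorem \ref{t:characterization-Ascoli} via the observation that a dense subspace meets an open set $A$ in a set dense in $A$. You merely spell out the intermediate inclusion $\overline{A\cap B_x}\supseteq\overline{A}$ a bit more explicitly than the paper does.
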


\begin{proof}
The necessity is trivial (take $B_x=X$ for every $x\in X$). To prove the sufficiency, let $U$ be an open subset of $X$. Take an arbitrary point $x\in \overline{U}$. Since $B_x\cap U$ is dense in $U$, it follows that $x\in B_x\cap \overline{U\cap B_x}$. Thus, by Theorem \ref{t:characterization-Ascoli}, $X$ is an Ascoli space.\qed
\end{proof}

As an application of Corollary \ref{c:dense-Ascoli}, we obtain Lemma 2.7 of \cite{GGKZ}.
\begin{proposition}[\cite{GGKZ}] \label{p:dense-Ascoli}
Let $Y$ be a dense subset of a homogeneous space (in particular, a topological group or a locally convex space) $X$. If $Y$ is an Ascoli space, then $X$ is also an Ascoli space.
\end{proposition}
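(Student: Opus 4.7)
The plan is to reduce to Corollary \ref{c:dense-Ascoli}: it suffices to show that every point $x\in X$ lies in a dense Ascoli subspace of $X$. The dense Ascoli subspace will be obtained by translating $Y$ via a homeomorphism of $X$ provided by homogeneity.

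First, we may assume $Y\neq\emptyset$ (otherwise $X=\overline{Y}=\emptyset$ and there is nothing to prove). Fix a point $y_0\in Y$. For each $x\in X$, homogeneity of $X$ supplies a homeomorphism $h_x:X\to X$ with $h_x(y_0)=x$. Define
\[
B_x:=h_x(Y)\subseteq X.
\]
Then $x=h_x(y_0)\in B_x$, and since $h_x$ is a homeomorphism of $X$ onto itself, $B_x$ is dense in $X$ (homeomorphisms carry dense sets to dense sets). Moreover, $h_x$ restricts to a homeomorphism between $Y$ and $B_x$, so $B_x$ is homeomorphic to $Y$; because being Ascoli is obviously a topological invariant, $B_x$ is an Ascoli space.

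Thus every $x\in X$ belongs to a dense Ascoli subspace $B_x$ of $X$, and Corollary \ref{c:dense-Ascoli} implies that $X$ is Ascoli.

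There is no serious obstacle here: the argument is a one-line consequence of Corollary \ref{c:dense-Ascoli} once one exploits homogeneity to translate the given dense Ascoli set $Y$ so that its translate contains any prescribed point of $X$. The only thing worth flagging is that one uses exactly two trivial facts about homeomorphisms (preservation of density and of the Ascoli property), both of which follow immediately from the definitions.
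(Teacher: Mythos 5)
Your proof is correct and is essentially identical to the paper's: both fix a point, use homogeneity to find a homeomorphism of $X$ carrying $Y$ onto a dense Ascoli subspace containing that point, and then invoke Corollary \ref{c:dense-Ascoli}. You merely spell out the two routine verifications (preservation of density and of the Ascoli property under homeomorphisms) that the paper leaves implicit.
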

\begin{proof}
Let $x\in X$. Take a homeomorphism $h$ of $X$ such that $x\in h(Y)$. Since $B_x:=h(Y)$ is a dense Ascoli subspace of $X$, Corollary \ref{c:dense-Ascoli} implies that $X$ is an Ascoli space. \qed
\end{proof}

Blasco proved in \cite{Blasco-77} that if $X$ is a $k_\IR$-space, then so is $\mu X$. For Ascoli spaces we prove below a stronger result which shows that not only $\mu X$, but also its  Dieudonn\'{e} completion $\DD X$   are Ascoli. We need the following lemma.

\begin{lemma} \label{l:ext-prod-comp}
Let $K$ be a compact space, and let $Y$ be a space. Then any continuous function $f: K\times Y\to\IR$ is extended to a continuous function $\hat f: K\times \DD Y\to\IR$. Moreover, if $K$ is metrizable, then $f$ is extended to a continuous function  $\hat f: K\times \upsilon Y\to\IR$.
\end{lemma}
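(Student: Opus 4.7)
The plan is to reduce the two-variable problem to a one-variable problem valued in a function space, and then apply the universal extension properties of $\DD Y$ and $\upsilon Y$.

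First I would define the associated map $\tilde{f}\colon Y\to \CC(K)$ by $\tilde{f}(y)(k):=f(k,y)$; note that, since $K$ is compact, the topology on $\CC(K)$ coincides with the topology of uniform (sup-norm) convergence. The crucial sub-step is to verify that $\tilde{f}$ is continuous. For $y_0\in Y$ and $\e>0$, I would cover the compact slice $K\times\{y_0\}$ by finitely many open rectangles $U_{k_i}\times V_{k_i}$ on which the values of $f$ differ from $f(k_i,y_0)$ by less than $\e/2$, and set $V:=\bigcap_{i=1}^n V_{k_i}$. Then for every $y\in V$ and $k\in K$, choosing $i$ with $k\in U_{k_i}$, a triangle inequality gives $|f(k,y)-f(k,y_0)|<\e$, so $\|\tilde{f}(y)-\tilde{f}(y_0)\|_\infty\le \e$ and $\tilde{f}$ is continuous.

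Next, to obtain the extension to $K\times \DD Y$, I would invoke that $\CC(K)$ is a Banach space, hence a complete metric space, and in particular Dieudonn\'e complete. By the universal property of the Dieudonn\'e completion (every continuous map from $Y$ to a Dieudonn\'e complete space extends continuously and uniquely to $\DD Y$), $\tilde{f}$ extends to a continuous $\hat{g}\colon \DD Y\to \CC(K)$. I then define $\hat{f}\colon K\times \DD Y\to \IR$ by $\hat{f}(k,y):=\hat{g}(y)(k)$. Continuity of $\hat{f}$ follows because it is the composition of $\Id_K\times \hat{g}\colon K\times \DD Y\to K\times \CC(K)$ with the evaluation map $(k,\phi)\mapsto \phi(k)$, which is jointly continuous when $\CC(K)$ carries the sup-norm topology (a standard check using uniform continuity of $\phi_0$ on $K$).

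For the \emph{moreover} part, assume in addition that $K$ is metrizable. Then $\CC(K)$ is a separable Banach space, hence a separable complete metric space, hence Lindel\"of, and therefore realcompact. By the universal extension property of $\upsilon Y$ (every continuous map from $Y$ to a realcompact space extends continuously to $\upsilon Y$), $\tilde{f}$ extends to a continuous $\tilde{g}\colon \upsilon Y\to \CC(K)$, and one defines $\hat{f}(k,y):=\tilde{g}(y)(k)$ exactly as before. The main obstacle throughout is establishing continuity of $\tilde{f}$ into $\CC(K)$ with its \emph{uniform} topology rather than just the pointwise topology; this is precisely where compactness of $K$ is essential, and once it is in hand the remaining steps are routine appeals to the universal properties of the two completions combined with joint continuity of the evaluation map.
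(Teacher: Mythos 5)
Your proposal is correct and follows essentially the same route as the paper: pass to the associated map $Y\to C(K)$ into the sup-norm Banach space, extend it over $\DD Y$ (resp.\ $\upsilon Y$, using that $C(K)$ is realcompact when $K$ is metrizable), and uncurry via the jointly continuous evaluation map. You merely spell out the details (the tube-lemma argument for continuity of $\tilde f$ and the Lindel\"of/realcompactness of a separable $C(K)$) that the paper leaves implicit or delegates to Engelking's Theorem 3.11.16.
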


\begin{proof}
Define $\varphi: Y\to C(K)$ by  $\varphi(y)(x):=f(x,y)$, where $C(K)$ is the Banach space of continuous functions on $K$ with the $\sup$-norm. Since $K$ is compact, $\varphi$ is continuous. Therefore, $\varphi$  is extended to a continuous function $\hat\varphi: \DD Y\to C(K)$  (if $K$ is metrizable, then, by Theorem 3.11.16 of \cite{Eng},  $\varphi$  is extended to a continuous function $\hat\varphi: \upsilon Y\to C(K)$).
Set $\hat f(x,y):=\hat\varphi(y)(x)$ for $(x,y)\in K\times \DD Y$ (resp., $(x,y)\in K\times \upsilon Y$). Then $\hat f(x,y)$ is a desired continuous extension of $f(x,y)$.\qed
\end{proof}

\begin{theorem} \label{t:completion-Ascoli}
Let $X$ be a space, and and let $Y$ be such that $X\subseteq Y\subseteq \beta X$. Then:
\begin{enumerate}
\item[{\rm(i)}] if $X$ is Ascoli and $Y\subseteq \DD X$, then  $Y$ is a Ascoli space;
\item[{\rm(ii)}] if $X$ is sequentially Ascoli  and $Y\subseteq \upsilon X$, then $Y$ is a  sequentially Ascoli space.
\end{enumerate}
\end{theorem}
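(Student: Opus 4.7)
The plan is to reduce both (i) and (ii) to the characterizations in Theorem \ref{t:Acoli-scf} and Theorem \ref{t:seq-Acoli-scf}, and then use the extension Lemma \ref{l:ext-prod-comp} to transfer continuity from $X \times K$ to $Y \times K$. In both cases $Y$ is Tychonoff as a subspace of $\beta X$, so it suffices to verify the relevant functional criterion.

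Fix a compact space $K$ in case (i) (or $K = \mathbf{s}$ in case (ii)) and let $\Phi : Y \times K \to \IR$ be a separately continuous $k$-continuous function; I want to prove $\Phi$ is continuous. First, the restriction $\Phi_0 := \Phi{\restriction}_{X \times K}$ is again separately continuous, and it is $k$-continuous on $X \times K$ because every compact subset of $X$ is compact in $Y$ and so $\Phi$ restricts continuously to it. Since $X$ is (sequentially) Ascoli, Theorem \ref{t:Acoli-scf} (resp., Theorem \ref{t:seq-Acoli-scf}) gives that $\Phi_0$ is continuous on $X \times K$.

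Now apply Lemma \ref{l:ext-prod-comp} to $\Phi_0$: in case (i), since $K$ is compact, $\Phi_0$ extends to a continuous function $\hat\Phi : K \times \DD X \to \IR$; in case (ii), since $K = \mathbf{s}$ is metrizable, $\Phi_0$ extends to a continuous function $\hat\Phi : K \times \upsilon X \to \IR$. Because $Y \subseteq \DD X$ in case (i), and $Y \subseteq \upsilon X$ in case (ii), the restriction of $\hat\Phi$ gives a continuous function on $K \times Y$ which extends $\Phi_0$.

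It remains to show $\hat\Phi(t,y) = \Phi(y,t)$ for every $(t,y) \in K \times Y$. Fix $t \in K$. Both maps $y \mapsto \hat\Phi(t,y)$ and $y \mapsto \Phi(y,t)$ are continuous on $Y$ (the first by continuity of $\hat\Phi$, the second by separate continuity of $\Phi$), and they coincide on $X$ because $\hat\Phi$ extends $\Phi_0$. Since $X$ is dense in $\beta X$ and $Y \subseteq \beta X$, $X$ is dense in $Y$, so the two continuous functions agree on all of $Y$. Hence $\Phi$ equals the restriction of $\hat\Phi$, and is therefore continuous. By Theorem \ref{t:Acoli-scf} (resp., Theorem \ref{t:seq-Acoli-scf}), $Y$ is (sequentially) Ascoli. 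The only potentially delicate point is the applicability of Lemma \ref{l:ext-prod-comp} in case (ii) — but it is exactly here that the metrizability of $\mathbf{s}$ plays its role, allowing the extension to reach all of $\upsilon X$ rather than merely $\DD X$. \qed
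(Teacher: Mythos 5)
Your proof is correct and follows essentially the same route as the paper: restrict $\Phi$ to $X\times K$, invoke Theorem \ref{t:Acoli-scf} (resp.\ Theorem \ref{t:seq-Acoli-scf}) to get continuity there, extend via Lemma \ref{l:ext-prod-comp} to $K\times\DD X$ (resp.\ $K\times\upsilon X$ using metrizability of $\mathbf{s}$), and identify the extension with $\Phi$ on $K\times Y$ by density of $X$ in $Y$ together with separate continuity. The only difference is that you spell out the identification step $\hat\Phi=\Phi$ in more detail than the paper does.
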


\begin{proof}
Let $K$ be a compact space (resp., $K=\mathbf{s}$), and let $f:K\times Y$ be a $k$-continuous separately continuous function. Then the restriction $f{\restriction}_{K\times X}$ is also $k$-continuous and separately continuous. Since $X$ is (sequentially) Ascoli, Theorem \ref{t:Acoli-scf} (resp., Theorem \ref{t:seq-Acoli-scf}) implies that $f{\restriction}_{K\times X}$ is continuous. By Lemma \ref{l:ext-prod-comp}, $f{\restriction}_{K\times X}$  is extended to a continuous function $\hat f:K\times Y\to\IR$. As $X$ is dense in $Y$ and $f$ is continuous by the second argument, it follows that $f=\hat f$ is continuous. Once more applying Theorem \ref{t:Acoli-scf} (resp., Theorem \ref{t:seq-Acoli-scf}) we obtain that $Y$ is a (sequentially) Ascoli space.\qed
\end{proof}


If $Y$ is a dense pseudocompact subspace of a space $X$ and $Y$ is (sequentially) Ascoli, then, by Proposition 2.2 of \cite{Gabr-pseudo-Ascoli},   also $X$ is a (sequentially) Ascoli space. This result, Proposition \ref{p:dense-Ascoli} and Theorem \ref{t:completion-Ascoli} motivate the following problem.
Let $Y$ be a dense subspace of a space $X$, and assume that $Y$ is (sequentially) Ascoli. {\em Is then also $X$ (sequentially) Ascoli}?
We answer this question in the negative.

\begin{example} {\em
Let $X=\NN$. Then $X$ is discrete and hence Ascoli. Take an arbitrary point $z\in\beta X\SM X$, and set $Y:=X\cup\{z\}$. Then $X$ is dense in $Y$, does not have infinite compact sets, and  is not a $P$-space (recall that a space  is a {\em $P$-space} if every countable intersection of open sets is open).  Thus, by Corollary 2.2 of \cite{Gabr-lc-Ck}, the space $Y$ is not even sequentially Ascoli.} 
\end{example}

A very complicated example of a pseudocompact Asoli space $Y$ which is not a $k_\IR$-space was constructed by Kato \cite{Kato}. A considerably simpler example was given by Blasco \cite{Blasco-78} (where he constructed an even $\w$-bounded space which is not a $k_\IR$-space). In the next proposition we give a simple method of constructing of pseudocompact Asoli spaces which are not $k_\IR$-spaces.

\begin{proposition} \label{p:Ascoli-non-kR-pseudocompact}
Let $\lambda>\w_1$, $X$ be a subspace of $\big[\tfrac{1}{2},1\big]^\lambda$, and let
\[
S:= \big\{ (x_i)\in [0,1]^\lambda: \, \mbox{ the support } \{i\in \lambda: x_i\not=0\} \mbox{ of } (x_i) \mbox{ is countable}\big\}.
\]
Then the subspace $Y:=S\cup X$ of $[0,1]^\lambda$ satisfies the following conditions:
\begin{enumerate}
\item[{\rm(i)}] $Y$ is a pseudocompact Ascoli space;
\item[{\rm(ii)}] if $K$ is a compact subset of $Y$, then $F:=K\cap S$ and $G:=K\cap X$ are compact;
\item[{\rm(iii)}] $Y$ is not a $k_\IR$-space;
\item[{\rm(iv)}] $X$ is a closed subspace of $Y$.
\end{enumerate}
\end{proposition}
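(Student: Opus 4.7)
My plan is to prove the four parts in the order (iv), (ii), (iii), (i): the separating function used in (iii) is $k$-continuous precisely by virtue of (ii), while (i) will be obtained cleanly from a completion-theoretic argument applied to $S$ alone.

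For (iv), every $s\in S$ has some coordinate $i$ with $s(i)=0$, so the basic open set $\{y\in Y:y(i)<\tfrac{1}{2}\}$ is a neighborhood of $s$ in $Y$ disjoint from $X$ (as $X\subseteq[\tfrac{1}{2},1]^\lambda$); hence $S$ is open in $Y$ and $X$ is closed. For the $G$-half of (ii) the same observation gives $S\cap[\tfrac{1}{2},1]^\lambda=\emptyset$, so $X=Y\cap[\tfrac{1}{2},1]^\lambda$ and $G=K\cap[\tfrac{1}{2},1]^\lambda$ is closed in the compact $K$ and therefore compact.

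The compactness of $F=K\cap S$ is the main technical step and is where $\lambda>\w_1$ enters. Sequentially, $F$ is closed in $K$: for any sequence $(s_n)\subseteq S$ the union $\bigcup_n\supp(s_n)$ is countable, so for every $i$ outside this union $s_n(i)=0$ for all $n$, forcing any limit $z$ to have $z(i)=0$ and thus $z\notin X$. Promoting from sequences to nets is the delicate step; my approach is by contradiction: assume a net $(f_\alpha)\subseteq F$ converges in $[0,1]^\lambda$ to some $z\in G\subseteq X$. For each $i\in\lambda$ one has $f_\alpha(i)\to z(i)\geq\tfrac{1}{2}$, so $i$ eventually lies in $\supp(f_\alpha)$; yet every individual support is countable, and $\lambda>\w_1$ rules out laying all these supports along a single well-ordered chain (the union of $\leq\w_1$ countable sets has cardinality $\leq\w_1<\lambda$). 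Exploiting this combinatorial tension together with the compactness of $K$ should produce, via a suitably chosen ultrafilter on $\{f_\alpha\}$, a cluster point in $[0,1]^\lambda$ having both uncountable support and a vanishing coordinate, hence lying in $[0,1]^\lambda\setminus Y$, which contradicts $\overline{\{f_\alpha\}}\subseteq\overline{K}=K\subseteq Y$. I expect this final step --- producing a ``forbidden'' cluster point outside $Y$ --- to be the main obstacle.

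With (ii) at hand, (iii) is immediate (assuming $X\neq\emptyset$): define $f:Y\to\IR$ by $f{\restriction}_X\equiv 1$ and $f{\restriction}_S\equiv 0$; for any compact $K\subseteq Y$, (ii) gives the clopen decomposition $K=F\sqcup G$, so $f{\restriction}_K$ is locally constant and hence continuous, making $f$ a $k$-continuous function. However $S$ is dense in $Y$ (any basic neighborhood in $[0,1]^\lambda$ is determined by finitely many coordinates, which can always be matched by a finitely-supported element of $S$), so each $x\in X$ is a limit of points of $S$ while $f(x)=1\neq 0$; thus $f$ is not continuous and $Y$ is not a $k_\IR$-space. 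For (i), pseudocompactness follows from $S$ being $\w$-bounded (any countable subset of $S$ lies in a countable subproduct of $[0,1]^\lambda$ and has compact closure there) and dense in $Y$. For the Ascoli property I would invoke Theorem~\ref{t:completion-Ascoli}(i) with $S$ in the role of ``$X$'': the classical fact that every continuous real function on a $\Sigma$-product depends on only countably many coordinates yields $\beta S=[0,1]^\lambda$, and because $S$ is countably compact every continuous map from $S$ into a complete metric space has relatively compact image and extends continuously to $\beta S$, giving $\DD S=\beta S=[0,1]^\lambda\supseteq Y$; meanwhile $S$, as a $\Sigma$-product of metric compacta, is Fr\'{e}chet-Urysohn, hence a $k$-space and in particular Ascoli. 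Theorem~\ref{t:completion-Ascoli}(i) applied to $S\subseteq Y\subseteq\DD S$ then yields that $Y$ is Ascoli.
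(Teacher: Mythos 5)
Your parts (iv), the $G$-half of (ii), (iii) and (i) are correct and essentially coincide with the paper's argument: the observation $X=Y\cap\big[\tfrac12,1\big]^\lambda$ gives (iv) and the compactness of $G$; the characteristic function of $X$ gives (iii) once (ii) is known (and your caveat that $X\neq\emptyset$ must be assumed is justified, since for $X=\emptyset$ the space $Y=S$ is Fr\'echet--Urysohn and hence a $k_\IR$-space); and (i) is obtained, exactly as in the paper, from the inclusions $S\subseteq Y\subseteq\DD S=[0,1]^\lambda$ together with Theorem \ref{t:completion-Ascoli}(i), $S$ being a dense $\w$-bounded Fr\'echet--Urysohn (hence Ascoli) subspace.

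The genuine gap is the compactness of $F=K\cap S$, which is the technical heart of the whole proposition and which you explicitly leave open (``I expect this final step to be the main obstacle''). Your plan --- produce a cluster point of a net in $F$ having uncountable support yet also a vanishing coordinate --- identifies the right target but cannot be run on an arbitrary net: the index set of a net witnessing $z\in\overline{F}\cap X$ may be enormous, the union of the supports of its terms may be all of $\lambda$, and then no coordinate is forced to vanish at a cluster point; the count ``a union of $\leq\w_1$ countable sets has size $\leq\w_1<\lambda$'' is of no use until you have reduced to $\w_1$-many points of $F$. The missing device is exactly this reduction. Fix $z=(z_i)\in\overline{F}$ with $z\notin S$, so $z\in X$. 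Since $S$ is sequentially compact and $K$ is closed in $[0,1]^\lambda$, the set $F$ is sequentially compact; hence for every countable ordinal $\alpha$ the image of $F$ under the natural projection $\pi_\alpha:[0,1]^\lambda\to[0,1]^\alpha$ is a sequentially compact subset of a metrizable compact space, therefore compact and closed, and so $\pi_\alpha(z)\in\pi_\alpha(F)$. Choosing $q_\alpha\in F$ with $\pi_\alpha(q_\alpha)=\pi_\alpha(z)$ for each $\alpha<\w_1$ yields a family whose supports have union $A$ of cardinality at most $\w_1<\lambda$. A cluster point $y\in K$ of the net $(q_\alpha)_{\alpha<\w_1}$ then satisfies $\pi_{\w_1}(y)=\pi_{\w_1}(z)$, so $y$ has uncountably many coordinates $\geq\tfrac12$ and $y\notin S$, while $y_i=0$ for every $i\in\lambda\SM A$, so $y\notin X$; thus $y\notin Y$, contradicting $y\in K\subseteq Y$. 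Without this (or an equivalent) construction, (ii) --- and with it the $k$-continuity of the separating function in (iii) --- remains unproved.
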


\begin{proof}
(i) It is well-known (and easy to check) that $S$ is a dense sequentially compact subspace of $[0,1]^\lambda$. Therefore, the inclusions $S\subseteq Y\subseteq \DD S=[0,1]^\lambda$ and Theorem \ref{t:completion-Ascoli} imply that  $Y$ is a pseudocompact Asoli space (the same conclusion immediately follows also from Proposition 2.2 and Theorem 1.2 of  \cite{Gabr-pseudo-Ascoli}).
\smallskip

(ii) It suffices to show that the sets $F$ and $G$ are closed.
If $y=(y_i)\in \overline{G}$, then for every $i\in\lambda$, we have $y_i\geq \tfrac{1}{2}$. Therefore, $y\not\in S$ and hence $y\in G$. Thus $G$ is closed.

Assume that $x=(x_i)\in \overline{F}$, and suppose for a contradiction that $x\not\in S$ and hence $x\in X\subseteq \big[\tfrac{1}{2},1\big]^\lambda$.  For every $\alpha<\w_1 <\lambda$, we consider  the natural projection $\pi_\alpha:[0,1]^\lambda \to [0,1]^\alpha$. Since $S$ is sequentially compact and $K$ is compact, it follows that $F$ is sequentially compact. As $\alpha$ is countable, we obtain that $\pi_\alpha(F)$ is a sequentially compact subset of the metrizable compact space $[0,1]^\alpha$. Therefore $\pi_\alpha(F)$ is compact, and hence $\pi_\alpha(x)\in \pi_\alpha(F)=\pi_\alpha(\overline{F})$. Choose $q_\alpha\in F$ such that $\pi_\alpha(x)=\pi_\alpha(q_\alpha)$.

Consider the set $Q:=\{q_\alpha: \alpha<\w_1\}\subseteq F$. As $Q\subseteq K$, the set $Q$ has a cluster point $y=(y_i)\in K$. Taking into account that $\pi_\alpha(x)=\pi_\alpha(q_\alpha)$ for every $\alpha<\w_1$, we obtain that $\pi_{\w_1}(x)=\pi_{\w_1}(y)$. Therefore $y$ has at least $\w_1$ non-zero coordinates and hence $y\not\in S$. On the other hand, the support $A$ of $Q$ has cardinality $\w_1$. As $\w_1 <\lambda$, we obtain that $y_i=0$ for every $i\in\lambda\SM A$. Therefore $y\not\in X$. Hence $y\not\in Y$ which is impossible.
\smallskip

(iii) Consider the characteristic function $\mathbf{1}_{X}$ of $X$. Since $S$ is dense in $Y$,  $\mathbf{1}_{X}$ is discontinuous. It follows from (ii) that  $\mathbf{1}_{X}$ is $k$-continuous.  Thus $Y$ is not a $k_\IR$-space.

(iv) is clear. \qed
\end{proof}

Let us recall some of the most important classes of pseudocompact spaces.
A space $X$ is called
\begin{enumerate}
\item[$\bullet$] {\em sequentially compact} if every sequence in $X$ has a convergent subsequence;
\item[$\bullet$] {\em $\w$-bounded} if every sequence in $X$ has compact closure;
\item[$\bullet$] {\em totally countably compact} if every sequence in $X$ has a subsequence with compact closure;
\item[$\bullet$]  {\em near sequentially compact } if for any sequence $(U_n)_{n\in\w}$ of open sets in $X$ there exists a sequence $(x_n)_{n\in\w}\in\prod_{n\in\w}U_n$ containing a convergent subsequence $(x_{n_k})_{k\in\w}$;
\item[$\bullet$] {\em selectively  $\w$-bounded }  if each infinite collection of {\em disjoint} open sets has an infinite subcollection each of which meets some fixed compact set;
\item[$\bullet$] {\em countably compact} if every sequence in $X$ has a cluster point.
\end{enumerate}
By Theorem 3.2 of \cite{Gabr-seq-Ascoli} and Theorem 1.2 of \cite{Gabr-pseudo-Ascoli}, totally countably compact spaces and near sequentially compact spaces are pseudocompact Ascoli spaces.
The class of selectively  $\w$-bounded spaces is exactly the class $\mathfrak{P}^\ast$ introduced by Frol\'{\i}k \cite{Frolik-tcc}.
By Theorem 1.2 of \cite{Gabr-pseudo-Ascoli}, $X$ is selectively  $\w$-bounded if, and only if, it is a pseudocompact (sequentially) Ascoli space. In the next diagram we summarize the relationships between the above-defined notions and note that none of these implications is reversible: 
\[
\xymatrix{
\mbox{$\w$-bounded}  \ar@{=>}[r] & {\substack{\mbox{totally countably} \\ \mbox{compact}}}  \ar@{=>}[rd] \ar@{=>}[rrr] &&& {\substack{\mbox{countably} \\ \mbox{compact}}}  \ar@{=>}[d]\\
{\substack{\mbox{sequentially} \\ \mbox{compact}}}  \ar@{=>}[r]  \ar@{=>}[ru] &  {\substack{\mbox{near sequentially} \\ \mbox{compact}}} \ar@{=>}[r]  & {\substack{\mbox{selectively} \\ \mbox{ $\w$-bounded}}}  \ar@{=}[r] &{\substack{\mbox{pseudocompact} \\ \mbox{Ascoli}}}  \ar@{=>}[r] & \mbox{pseudocompact}
}
\]

In Construction 2.3 of \cite{Noble-cc}, Noble proved that any Tychonoff space is closely embedded into a pseudocompact $k_\IR$-space, and that each countably compact space is closely embedded into a countably compact $k_\IR$-space. Below we complement this remarkable Noble's result.

\begin{theorem} \label{t:Tych-embed-ps-Ascoli}
For any infinite Tychonoff space $X$, there is a $C^\ast$-embedding $T$ from $X$ into a pseudocompact Asoli space $Y$ such that
\begin{enumerate}
\item[{\rm(i)}] $Y$ is not a $k_\IR$-space;
\item[{\rm(ii)}] $w(Y)= \max\{w(\beta X), \aleph_2\}$;
\item[{\rm(iii)}] $T(X)$ is a closed subspace of $Y$;
\item[{\rm(iv)}] if, in addition, $X$ is $\w$-bounded (sequentially compact, totally countably compact, near sequentially compact or countably compact), then so is $Y$.
\end{enumerate}
\end{theorem}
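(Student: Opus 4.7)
The strategy is to realize $Y$ as an instance of Proposition~\ref{p:Ascoli-non-kR-pseudocompact} with the ambient cube chosen large enough to accommodate an embedding of $\beta X$. Set $\lambda:=\max\{w(\beta X),\aleph_2\}$, so in particular $\lambda>\w_1$. Composing the Tychonoff embedding of $\beta X$ into $[0,1]^{w(\beta X)}$ with a homeomorphism $[0,1]\cong[\tfrac{1}{2},1]$, fix an embedding $h\colon\beta X\hookrightarrow[\tfrac{1}{2},1]^{\lambda}$, and set $T:=h{\restriction}_X$. Apply Proposition~\ref{p:Ascoli-non-kR-pseudocompact} to the subspace $X':=T(X)\subseteq[\tfrac{1}{2},1]^{\lambda}$ to obtain $Y:=S\cup T(X)$, where $S\subseteq[0,1]^{\lambda}$ is the $\Sigma$-product of $\lambda$ copies of $[0,1]$ (elements with countable support).

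Items (i) and (iii) of the theorem follow directly from parts (i), (iii) and (iv) of Proposition~\ref{p:Ascoli-non-kR-pseudocompact}. For the weight equality (ii), the inclusion $Y\subseteq[0,1]^{\lambda}$ gives $w(Y)\le\lambda$, while the standard fact $w(S)=\lambda$ combined with $S\subseteq Y$ forces $w(Y)\ge\lambda$. To see that $T$ is a $C^\ast$-embedding into $Y$, note that if $f\in C(X)$ is bounded then $f\circ T^{-1}$ extends continuously to $h(\beta X)$ by the defining property of $\beta X$, then to all of $[0,1]^{\lambda}$ by Tietze's theorem, and finally restricts to a continuous extension on $Y$.

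For (iv), the key observation is that $S$ is itself $\w$-bounded (hence sequentially compact, totally countably compact, near sequentially compact, and countably compact), being a $\Sigma$-product of separable metric compacta, and that $S$ is dense in $Y$. Given a sequence or countable subset $C\subseteq Y$, decompose it as $C_S\sqcup C_X$ with $C_S\subseteq S$ and $C_X\subseteq T(X)$: the $S$-part has the requisite compact closure or convergent subsequence inside $S\subseteq Y$, while the $X$-part has it by the hypothesis on $X$ together with the fact (Proposition~\ref{p:Ascoli-non-kR-pseudocompact}(ii)) that the $Y$-closure of a compact subset of $T(X)$ coincides with its closure in $T(X)$. For near sequential compactness one selects $x_n\in U_n\cap S$ using density of $S$ and invokes sequential compactness of $S$. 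The principal obstacle is verifying in each of the five cases that the union of the two parts retains the required compactness-type property in $Y$; this is routine but relies crucially on the decomposition of compact subsets of $Y$ provided by Proposition~\ref{p:Ascoli-non-kR-pseudocompact}(ii).
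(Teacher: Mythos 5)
Your proposal is correct and follows essentially the same route as the paper: embed $\beta X$ into $\big[\tfrac{1}{2},1\big]^{\lambda}$ with $\lambda=\max\{w(\beta X),\aleph_2\}$, set $Y:=S\cup T(X)$, invoke Proposition~\ref{p:Ascoli-non-kR-pseudocompact} for (i)--(iii), and use that $S$ is a dense $\w$-bounded sequentially compact subspace together with the $S$/$T(X)$ decomposition for (iv). You merely fill in details the paper leaves implicit (the weight computation and the case analysis in (iv)).
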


\begin{proof}
Let $\beta:X\to \beta X$ be a natural $C^\ast$-embedding, and let $I:\beta X\to \big[\tfrac{1}{2},1\big]^\lambda$ be an embedding, where $\lambda=\max\{w(\beta X), \aleph_2\}>\w_1$. Then $T:=I\circ\beta:X\to  \big[\tfrac{1}{2},1\big]^\lambda$ is a $C^\ast$-embedding. Let $Y:=S\cup T(X)$ with the topology induced from $[0,1]^\lambda$, where  $S$ is the subset of $[0,1]^\lambda$ defined in Proposition \ref{p:Ascoli-non-kR-pseudocompact}. Then (i), (ii) and (iii) immediately follow from Proposition \ref{p:Ascoli-non-kR-pseudocompact}. Observe that  $S$ is  sequentially compact and, clearly, $\w$-bounded. Therefore if $X$ satisfies one of the conditions in (iv), it is easy to see that also $Y$ satisfies the same condition. Thus also (iv) holds true.\qed
\end{proof}

\begin{theorem} \label{t:Tych-embed-ps-Ascoli-2}
Any Tychonoff space $X$ is closely embedded into a pseudocompact Asoli space $Y$ such that
\begin{enumerate}
\item[{\rm(i)}] $Y$ is not a $k_\IR$-space;
\item[{\rm(ii)}] $w(Y)= \max\{w(X), \aleph_2\}$;
\item[{\rm(iii)}] if, in addition, $X$ is $\w$-bounded (sequentially compact, totally countably compact, near sequentially compact or countably compact), then so is $Y$.
\end{enumerate}
\end{theorem}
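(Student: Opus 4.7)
The plan is to repeat the construction of Theorem \ref{t:Tych-embed-ps-Ascoli} but to embed $X$ directly into a Tychonoff cube of the optimal weight, bypassing $\beta X$ (which was the source of the weaker weight bound $w(\beta X)$). Set $\lambda:=\max\{w(X),\aleph_2\}$, so $\lambda>\w_1$. Because $X$ is Tychonoff, the evaluation map associated with a separating family of $w(X)$-many continuous functions $X\to[0,1]$ gives a topological embedding $X\hookrightarrow[0,1]^\lambda$; composing coordinatewise with the affine homeomorphism $t\mapsto\tfrac{1}{2}(t+1)$ produces an embedding $T\colon X\to\big[\tfrac{1}{2},1\big]^\lambda$.

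Next I would set $Y:=S\cup T(X)\subseteq[0,1]^\lambda$ with the inherited topology, where $S$ is the subspace defined in Proposition \ref{p:Ascoli-non-kR-pseudocompact}. Since $Y$ carries the subspace topology from $[0,1]^\lambda$, the map $T$ is automatically an embedding of $X$ into $Y$. Applying that proposition to $T(X)$ in the role of ``$X$'' then delivers at once that $Y$ is a pseudocompact Ascoli space, that $Y$ is not a $k_\IR$-space, and that $T(X)$ is closed in $Y$. This settles the ``closely embedded'' clause together with (i).

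For (ii), one has $w(Y)\leq w([0,1]^\lambda)=\lambda$. For the reverse inequality, for each $i<\lambda$ consider the point $e_i\in S\subseteq Y$ with $(e_i)_j=\delta_{ij}$ and the open set $V_i:=\{x\in Y:x_i>\tfrac{1}{2}\}$. For any base $\mathcal{B}$ of $Y$ one can pick $B_i\in\mathcal{B}$ with $e_i\in B_i\subseteq V_i$; if $B_i=B_j$ for some $i\neq j$, then $e_i\in V_j$, which is impossible since $(e_i)_j=0$. Hence $|\mathcal{B}|\geq\lambda$, so $w(Y)=\lambda=\max\{w(X),\aleph_2\}$.

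For (iii) I would follow the template of Theorem \ref{t:Tych-embed-ps-Ascoli}(iv): any sequence in $Y$ has infinitely many terms in $S$ or infinitely many in $T(X)$. In the first case one uses that $S$ is $\w$-bounded and sequentially compact, noting that the closure in $[0,1]^\lambda$ of a countable subset of $S$ remains inside $S$ and is already compact. In the second case one transfers the assumed compactness-type property from $X$ to $T(X)$ and exploits that $T(X)$ is closed in $Y$ with the right subspace topology, so any limit or cluster point produced inside $T(X)$ already lives in $Y$. The only (mild) obstacle is this case-by-case bookkeeping across the six classes listed in (iii); the substantive content—pseudocompactness, the Ascoli property, and the failure of $k_\IR$—is handed over wholesale to Proposition \ref{p:Ascoli-non-kR-pseudocompact}.
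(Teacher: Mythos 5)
Your construction is the same as the paper's: embed $X$ into $\big[\tfrac{1}{2},1\big]^\lambda$ with $\lambda=\max\{w(X),\aleph_2\}$ (the paper routes this through a compactification $K$ with $w(K)=w(X)$ and Engelking's embedding theorems, you use the diagonal of a separating family directly, which is equivalent), set $Y:=S\cup T(X)$, and hand (i), pseudocompactness, the Ascoli property and closedness of $T(X)$ to Proposition \ref{p:Ascoli-non-kR-pseudocompact}. The proposal is correct; your explicit lower-bound argument for $w(Y)\geq\lambda$ via the points $e_i$ and the sets $V_i$, and the two-case treatment of (iii) using that $S$ is sequentially compact and $\w$-bounded while $T(X)$ is closed, actually supply details the paper leaves to the reader.
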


\begin{proof}
By Theorem 3.5.2 of \cite{Eng}, there is an embedding $T$ of $X$ into a compact space $K$ such that $w(X)=w(K)$. By Theorem 3.2.5 of \cite{Eng}, there is an embedding $I:K\to \big[\tfrac{1}{2},1\big]^{\lambda}$, where $\lambda=\max\{w(X), \aleph_2\}$. Then $I\circ T:X\to  \big[\tfrac{1}{2},1\big]^\lambda$ is an embedding. Let $Y:=\big(I\circ T(X)\big)\cup  S$ with the topology induced from $[0,1]^\lambda$, where  $S$ is the subset of $[0,1]^\lambda$ defined in Proposition \ref{p:Ascoli-non-kR-pseudocompact}. Then (i) and (ii) immediately follow from Proposition \ref{p:Ascoli-non-kR-pseudocompact}. Observe that  $S$ is  sequentially compact and, clearly, $\w$-bounded. Therefore if $X$ satisfies one of the conditions in (iii), it is easy to see that also $Y$ satisfies the same condition. Thus also (iii) holds true.\qed
\end{proof}

\begin{remark} \label{rem:kR-1} {\em
(i) There are $\w$-bounded $s_\IR$-spaces which are not $k$-spaces. Indeed, the product $X=\w_1^{\aleph_2}$ is an $\w$-bounded space (it is easy to see the an arbitrary product of $\w$-bounded spaces is $\w$-bounded). 
The space $X$ is an $s_\IR$-space by Proposition \ref{p:noble:fc-prod}.  The space $X$ is not a $k$-space by Noble \cite{Noble-72} (see also Discussion 2.3 in \cite{Comfort-99}). Note also that, by \cite{Noble-72}, $\w_1^{\aleph_1}$ is a $k$-space.

(ii) There are pseudocompact $s_\IR$-spaces of weight $\w_1$ which are not $k$-spaces. The space $Y:=[0,1]^{\omega_1}\SM \{\mathbf{0}\}$ is a locally compact pseudocompact space of weight $w(Y)=\w_1$. For every $n\in\w$, let $y_n=(a_{in})\in Y$ be such that $a_{in}=1$ if $i=n$, and $a_{in}=0$, otherwise. Then the sequence $S=\{y_n\}_{n\in\w}$ is a closed discrete subset of $Y$. So $Y$ is not countably compact. Set $X:=Y^{\omega_1}$. By Example 7.7 of \cite{GR-kR-sR}, $X$ is an $s_\IR$-space of weight $w(Y)=\w_1$. Since $Y$ contains a closed infinite discrete subset,  $X$ contains a closed subspace topologically isomorphic to  $\w^{\w_1}$ which is not a $k$-space.
%
}
\end{remark}

Theorem \ref{t:Tych-embed-ps-Ascoli-2} and Remark \ref{rem:kR-1} suggest the following problems.
\begin{problem}
Does there exist an $\w$-bounded (totally countably compact, near sequentially compact or countably compact) $k_\IR$-space (or $k_\IR$-group) of weight $\aleph_1$ which is not a $k$-space?
\end{problem}

\begin{problem}
Does there exist an $\w$-bounded (totally countably compact, near sequentially compact or countably compact) Ascoli space (or group) of weight $\aleph_1$ which is not a $k_\IR$-space?
\end{problem}


\section{Functions on products $X \times Y$ with $Y$ being a $k_\IR$-space or an $s_\IR$-space} \label{sec:products-kR-sR}


We start this section with the discussion on products of Ascoli spaces, $k_\IR$-spaces and $s_\IR$-spaces and their relations with $k$-spaces. In particular, it shows that the implications in the diagram in the introduction are not invertible. Recall that a space $X$ is called {\em Fr\'{e}chet--Urysohn} if for any cluster point $a\in X$ of a subset $A\subseteq X$ there is a sequence $\{ a_n\}_{n\in\w}\subseteq A$ which converges to $a$.

\begin{remark} \label{rem:kR-perman} {\em
(i) The product of two first-countable spaces is first-countable. However, if to weaken the first countability to the property of being a Fr\'{e}chet--Urysohn space, then the product can be not a  $k_\IR$-space. In fact,  Proposition 2.14 of \cite{Gabr-seq-Ascoli} states that there is a  Fr\'{e}chet--Urysohn space $X$ whose square $X\times X$ is not even an Ascoli space. On the other hand, by (the proof of) Theorem 1.2 of \cite{Gab-LF}, also the product $\ell_1\times \varphi$ is  not an Ascoli space,  where $\varphi=\IR^{(\w)}$ is the direct locally convex sum of a countable family of $\IR$-s, and, by Nyikos \cite{nyikos},  $\varphi$ is a sequential, non-Fr\'{e}chet--Urysohn space.

(ii) It is well-known that a closed subspace of a $k$-space is also a $k$-space. However, an analogous result for $k_\IR$-spaces is not true in general. Indeed, let $X$ be a countable non-discrete space whose compact sets are finite. Then, by Corollary 2.2 of \cite{Gabr-lc-Ck}, $X$ is not a sequentially Ascoli space. On the other hand, being Lindel\"{o}f the space $X$ is realcompact (\cite[Theorem~3.11.12]{Eng}), and hence it is closely embedded into some power $\IR^\lambda$ (\cite[Theorem~3.11.3]{Eng}) which is a $k_\IR$-space (and a $\kappa$-Fr\'{e}chet--Urysohn space).
If we assume additionally that an Ascoli space (resp., a $k_\IR$-space) $X$ is a stratifiable space, then, by Proposition 5.11 of \cite{BG}, any closed subspace of $X$ also is Ascoli (resp., a $k_\IR$-space).

(iii) Arhangel'skii proved (see \cite[3.12.15]{Eng}) that if $X$ is a hereditary $k$-space (i.e., {\em every} subspace of $X$ is a $k$-space), then $X$ is Fr\'{e}chet--Urysohn. This result was essentially strengthen in Theorem 2.21 of \cite{Gabr-seq-Ascoli} where it is  shown that a hereditary Ascoli space must be Fr\'{e}chet--Urysohn. Consequently, a  hereditary $k_\IR$-space is a Fr\'{e}chet--Urysohn space.

(iv) In \cite{Mi73}, Michael constructed an $\aleph_0$-space which is a $k_\IR$-space but not a $k$-space. Moreover, Borges \cite{Borges-81} proved that this space is also stratifiable.

(v) By Theorem 3.5 of \cite{Frolik-tcc} and Theorem 1.2 of \cite{Gabr-pseudo-Ascoli} (see also Proposition 2.11 of \cite{Gabr-pseudo-Ascoli}), the product $X\times Y$ of a pseudocompact (sequentially) Ascoli  space $X$ and a pseudocompact space $Y$ is pseudocompact.}
\end{remark}

Below we prove an analogue of Cohen's result mentioned in Introduction  for $k_\IR$-spaces.

\begin{theorem} \label{p:lc-kR=kR}
Let $Y$ be a locally compact space, and let $X$ be a $k_\IR$-space. Then $X\times  Y$ is a $k_\IR$-space.
\end{theorem}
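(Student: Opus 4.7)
The plan is to use the characterization of $k_\IR$-spaces via $R$-quotient maps from Theorem~\ref{t:characterization-kR} together with the product-preservation result for $R$-quotient maps in Theorem~\ref{t:rq-prod}. The whole argument should reduce to a short diagram chase once we observe that products and topological sums of nice spaces give $k_\IR$-spaces.

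First, by the equivalence (i)$\Leftrightarrow$(ii) of Theorem~\ref{t:characterization-kR}, since $X$ is a $k_\IR$-space there exist compact spaces $\{K_\alpha\}_{\alpha\in\AAA}$ and an $R$-quotient mapping
\[
p:\bigoplus_{\alpha\in\AAA} K_\alpha \longrightarrow X.
\]
Since $Y$ is locally compact, Theorem~\ref{t:rq-prod} implies that the product map
\[
p\times\Id_Y : \Bigl(\bigoplus_{\alpha\in\AAA} K_\alpha\Bigr)\times Y \longrightarrow X\times Y
\]
is $R$-quotient.

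Next I would identify the domain with a topological sum. Using the standard fact that the product distributes over topological sums, we have
\[
\Bigl(\bigoplus_{\alpha\in\AAA} K_\alpha\Bigr)\times Y \;=\; \bigoplus_{\alpha\in\AAA}\bigl(K_\alpha\times Y\bigr).
\]
Each factor $K_\alpha\times Y$ is locally compact (product of a compact space and a locally compact space), and a topological sum of locally compact spaces is locally compact. A locally compact Hausdorff space is a $k$-space, hence a $k_\IR$-space.

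Finally, $X\times Y$ is the $R$-quotient image of the $k_\IR$-space $\bigoplus_{\alpha}(K_\alpha\times Y)$, so the implication (iii)$\Rightarrow$(i) of Theorem~\ref{t:characterization-kR} yields that $X\times Y$ is a $k_\IR$-space. There is no serious obstacle here: the only point requiring mild care is the distributivity $(\bigoplus_\alpha K_\alpha)\times Y=\bigoplus_\alpha(K_\alpha\times Y)$, which is a standard property of the coproduct in $\mathsf{Top}$, and the appeal to Theorem~\ref{t:rq-prod} which is what makes local compactness of $Y$ essential.
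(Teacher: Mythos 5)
Your proof is correct and follows essentially the same route as the paper: the paper also takes an $R$-quotient map $p:Z\to X$ with $Z$ locally compact (a topological sum of compact spaces), notes $Z\times Y$ is locally compact, applies Theorem~\ref{t:rq-prod}, and concludes via Theorem~\ref{t:characterization-kR}. Your explicit unpacking of $Z$ as $\bigoplus_\alpha K_\alpha$ and the distributivity of the product over the sum is just a more detailed rendering of the same argument.
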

\begin{proof}
By Theorem \ref{t:characterization-kR},  there is an $R$-quotient mapping $p:Z\to X$, where $Z$ is  a locally compact space. It is clear that $Z\times Y$ is locally compact, too. By Theorem \ref{t:rq-prod}, the product mapping $p\times \Id_Y\: Z\times Y\to X\times Y$ is $R$-quotient. Applying once more Theorem  \ref{t:characterization-kR} we obtain that $X\times Y$ is a $k_\IR$-space.\qed
\end{proof}

The next theorem is a natural analogue of Theorem \ref{t:Acoli-scf}.
\begin{theorem} \label{t:kR-prod-compact}
For a space $X$, the following assertions are equivalent:
\begin{enumerate}
\item[{\rm(i)}] $X$ is a $k_\IR$-space;
\item[{\rm(ii)}] for every compact space $K$, the product $X\times K$ is a $k_\IR$-space;
\item[{\rm(iii)}] for every compact space $K$,  each $k$-continuous function $f:X\times K\to\IR$ is continuous.
\end{enumerate}
\end{theorem}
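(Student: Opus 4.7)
My plan is to establish the cycle (i)$\Rightarrow$(ii)$\Rightarrow$(iii)$\Rightarrow$(i), each step being essentially immediate given the machinery already in place. This is a structural packaging theorem rather than one requiring a new technique, since the substantive work is hidden in Theorem \ref{p:lc-kR=kR}.

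For (i)$\Rightarrow$(ii), I would simply invoke Theorem \ref{p:lc-kR=kR} with the locally compact factor taken to be $K$ itself: every compact Hausdorff space is locally compact, so that theorem immediately gives $X\times K$ is a $k_\IR$-space. The real content of this step lives in the previously established machinery: Theorem \ref{t:characterization-kR} realizes $X$ as an $R$-quotient image of a topological sum of compact spaces, Theorem \ref{t:rq-prod} says that multiplying by a locally compact factor preserves $R$-quotient maps, and a second application of Theorem \ref{t:characterization-kR} converts the resulting $R$-quotient image back into the $k_\IR$-property.

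The implication (ii)$\Rightarrow$(iii) is immediate from the very definition of a $k_\IR$-space: if $X\times K$ is a $k_\IR$-space, then any $k$-continuous function $f:X\times K\to\IR$ is automatically continuous.

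For (iii)$\Rightarrow$(i), I would specialize (iii) to the trivial compact space $K=\{\ast\}$. The canonical projection $X\times\{\ast\}\to X$ is a homeomorphism and pulls back the compact subsets of $X$ to the compact subsets of $X\times\{\ast\}$, so under this identification the $k$-continuous real-valued functions on $X$ correspond exactly to the $k$-continuous real-valued functions on $X\times\{\ast\}$. Hence any $k$-continuous $g:X\to\IR$ gives rise to a $k$-continuous $f:X\times\{\ast\}\to\IR$, which by (iii) must be continuous, whence $g$ is continuous and $X$ is a $k_\IR$-space.

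There is no genuine obstacle in this argument; the only point worth noting is that this theorem parallels the characterization of (sequentially) Ascoli spaces in Theorems \ref{t:Acoli-scf} and \ref{t:seq-Acoli-scf}, but here one does not need to impose separate continuity or restrict to the test sequence $\mathbf{s}$, precisely because the stronger hypothesis of $k$-continuity on the whole product already forces enough regularity. In that sense the proof is the cleanest version of this kind of product-characterization result.
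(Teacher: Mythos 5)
Your argument is correct and matches the paper's proof essentially verbatim: (i)$\Rightarrow$(ii) via Theorem \ref{p:lc-kR=kR} (a compact space being locally compact), (ii)$\Leftrightarrow$(iii) by the definition of a $k_\IR$-space, and the return to (i) by taking $K$ to be a one-point space. No gaps.
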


\begin{proof}
The implication (i)$\Ra$(ii) follows from Proposition \ref{p:lc-kR=kR}, and the equivalence (ii)$\LRa$(iii) is valid by the definition of $k_\IR$-spaces. Finally, the implication (ii)$\Ra$(i) is trivial (take $K$ to be a one-point space).
\end{proof}

In \cite{Michael-lc}, Michael proved the converse assertion to Cohen's result. So we obtain a characterization of locally compact spaces, see \cite[3.12.14(c)]{Eng}: {\em A Tychonoff space $X$ is locally compact if and only if the product $X\times Y$ is a $k$-space for every $k$-space $Y$.} To obtain a full analogy of this result we use Proposition 4.3 from \cite{Gabr-seq-Ascoli}. Recall that a space $X$ is {\em locally functionally bounded} if every point of $X$ has a functionally bounded neighborhood.

\begin{proposition}[\cite{Gabr-seq-Ascoli}] \label{p:product-Ascoli}
If a space $X$ is not locally functionally bounded, then there is a Fr\'{e}chet--Urysohn fan $Y$ such that the product $X\times Y$ is not an Ascoli space.
\end{proposition}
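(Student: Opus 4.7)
The plan is to produce a Fréchet--Urysohn fan $Y$ together with a compact subset $K\subseteq\CC(X\times Y)$ that is not equicontinuous at some point, thereby witnessing that $X\times Y$ fails to be Ascoli. Fix once and for all a point $x_0\in X$ admitting no functionally bounded neighborhood: for every open $U\ni x_0$ there is $h_U\in C(X)$ with $\sup_U |h_U-h_U(x_0)|=\infty$.

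The first task is to extract, from this unboundedness data, a countable family of witnesses---continuous functions $\{f_n\}_{n\in\omega}\subseteq C(X,[0,1])$ with $f_n(x_0)=0$, together with sequences $(x_{n,k})_{k\in\omega}$ in $X$ converging to $x_0$ and satisfying $f_n(x_{n,k})=1$ for all $n,k$. Starting from $h_U$, truncation delivers an $f\in C(X,[0,1])$ with $f(x_0)=0$ taking the value $1$ somewhere in $U$; I would iterate this construction inside successively smaller neighborhoods of $x_0$ selected from a cofinal subfamily of the neighborhood filter, using Tychonoff separation to confine the supports. Then take $Y=S_\omega$, the sequential fan: the quotient of $\bigoplus_{n\in\omega}\mathbf{s}_n$ obtained by identifying all the limit points to a single vertex $\infty$. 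This $Y$ is a Fréchet--Urysohn space in which every sequence converging to $\infty$ is concentrated in finitely many spokes.

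With these data in hand, choose continuous bumps $\psi_{n,k}\colon Y\to[0,1]$ supported in the $n$-th spoke with $\psi_{n,k}(\tfrac{1}{k})=1$ and $\psi_{n,k}(\infty)=0$, and set $F_{n,k}(x,y):=f_n(x)\,\psi_{n,k}(y)\in C(X\times Y)$. The set $K:=\{F_{n,k}:n,k\in\omega\}\cup\{0\}\subseteq\CC(X\times Y)$ is compact---in fact it converges to $0$---because any compact $Q\subseteq X\times Y$ projects onto a subset of $Y$ meeting only finitely many spokes, so $Q$ is disjoint from $\supp F_{n,k}$ for all but finitely many pairs $(n,k)$, placing these $F_{n,k}$ in any basic $[Q;\e]$--neighborhood of $0$. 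For non-equicontinuity at $(x_0,\infty)$: any open neighborhood $V\times W$ of this point contains a tail of every spoke, and since $x_{n,k}\to x_0$ as $k\to\infty$, one can choose $n$ and then $k$ so large that $\tfrac{1}{k}$ lies in the $W$-tail of the $n$-th spoke and $x_{n,k}\in V$; then $F_{n,k}(x_{n,k},\tfrac{1}{k})=1$ while $F_{n,k}(x_0,\infty)=0$, ruling out equicontinuity.

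The main obstacle is the first step: at a point $x_0$ that is not first countable, the convergent sequences $(x_{n,k})_k\to x_0$ required for each witness $f_n$ are not handed to us by the bare unboundedness of some $h_U$. One must combine Tychonoff separation with the unboundedness hypothesis in a careful inductive construction along a cofinal subfamily of the neighborhood filter of $x_0$, producing simultaneously functions attaining $1$ and witnessing points whose genuine limit is $x_0$, while keeping the supports of the resulting $F_{n,k}$ disjoint enough for $K$ to remain compact in $\CC(X\times Y)$.
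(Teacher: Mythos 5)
The very first step of your plan is internally contradictory: you ask for continuous $f_n\in C(X,[0,1])$ with $f_n(x_0)=0$ together with sequences $x_{n,k}\to x_0$ satisfying $f_n(x_{n,k})=1$ for all $k$. Continuity of $f_n$ at $x_0$ forces $f_n(x_{n,k})\to f_n(x_0)=0$, so no inductive construction along a cofinal subfamily of the neighbourhood filter can produce such witnesses. Nor is this a removable technicality: the hypothesis supplies no nontrivial convergent sequences at $x_0$ at all (a non-discrete $P$-space is nowhere locally functionally bounded, since its functionally bounded sets are finite, yet it has no nontrivial convergent sequences), so any argument whose non-equicontinuity step hinges on points genuinely converging to $x_0$ is aimed at the wrong target. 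A second, independent gap is the compactness of $K$: because $f_n$ does not depend on $k$, for any $x_1$ with $f_n(x_1)=1$ the set $Q:=\{x_1\}\times \overline{I_n}$ (where $I_n$ is the $n$-th spoke, whose closure in the fan is a copy of $\mathbf{s}$ and hence compact) satisfies $\sup_Q|F_{n,k}|=1$ for every $k$. Thus $F_{n,k}\not\to 0$ in $\CC(X\times Y)$ and the countable set $K$ contains a sequence with no cluster point in $K$; your claim that a compact $Q$ misses $\supp F_{n,k}$ for all but finitely many pairs $(n,k)$ fails precisely because a single closed spoke is already compact.

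Both gaps are repaired by using the functional unboundedness where your sketch does not use it at all. For each neighbourhood $U$ of $x_0$ take $h_U\in C(X)$ unbounded on $U$ and points $u_{U,n}\in U$ with $h_U(u_{U,n})=n$; the sets $h_U^{-1}\big((n-\tfrac{1}{3},n+\tfrac{1}{3})\big)$ form a discrete family, so one obtains bumps $g_{U,n}$ (varying with $n$) with $g_{U,n}(u_{U,n})=1$ whose supports meet each compact subset of $X$ only finitely often; this is what makes $\{g_{U,n}\cdot\psi_{U,n}\}\cup\{0\}$ compact in $\CC(X\times Y)$. Moreover the spokes must be indexed by a neighbourhood base at $x_0$, so the fan is $V(\chi(x_0,X))$ rather than $S_\omega$ in general: non-equicontinuity at $(x_0,\infty)$ is then witnessed, for a given basic neighbourhood $V\times W$, by choosing the spoke labelled by some $U\subseteq V$, whose points $u_{U,n}$ lie in $V$ by fiat rather than by convergence, while a tail of that spoke lies in $W$. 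With these two changes your overall scheme does go through.
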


A space $X$ is {\em locally pseudocompact} if every point of $X$ has a neighborhood which is a pseudocompact space. Clearly, if $X$ is locally pseudocompact, then it is locally functionally bounded. It turns out that both these notions are equivalent.

\begin{lemma} \label{l:lpc=lfb}
A space $X$ is locally pseudocompact if, and only if, it is locally functionally bounded.
\end{lemma}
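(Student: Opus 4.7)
The plan is to reduce the lemma to the following local claim: if $U$ is an open subset of the Tychonoff space $X$ that is functionally bounded in $X$, then $\overline{U}$ is pseudocompact as a subspace. Granted this, if $x\in X$ has a functionally bounded neighborhood, then the interior $U$ of that neighborhood is open, contains $x$, and remains functionally bounded in $X$ (being contained in the original neighborhood); hence $\overline{U}$ is the desired pseudocompact neighborhood of $x$. The forward direction ``locally pseudocompact $\Ra$ locally functionally bounded'' is immediate, since for any $f\in C(X)$ the restriction $f{\restriction}_V$ to a pseudocompact neighborhood $V$ of $x$ is a continuous real-valued function on a pseudocompact space, hence bounded.

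For the local claim I would invoke the following two classical cluster-point reformulations, both standard equivalents of the usual characterizations via locally finite families of non-empty open sets and provable by a routine Tychonoff-function argument: (a) a Tychonoff space $Y$ is pseudocompact iff every sequence $(W_n)_{n\in\w}$ of non-empty open subsets of $Y$ has a cluster point in $Y$; (b) a subset $A$ of a Tychonoff space $X$ is functionally bounded in $X$ iff every sequence $(V_n)_{n\in\w}$ of non-empty open subsets of $X$ each meeting $A$ has a cluster point in $X$.

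Given a sequence $(W_n)_{n\in\w}$ of non-empty open subsets of $\overline{U}$, I would write $W_n=\overline{U}\cap U_n$ with $U_n$ open in $X$ and set $V_n:=U\cap U_n$. Since $U$ is dense in $\overline{U}$ and $U_n$ meets $\overline{U}$ in the non-empty open set $W_n$, each $V_n$ is non-empty, and by construction $V_n\subseteq U$. Applying (b) with $A=U$ yields a cluster point $x^*\in X$ of $(V_n)$; since $V_n\subseteq U$, every neighborhood of $x^*$ meets $U$, so $x^*\in\overline{U}$. For an arbitrary basic neighborhood $N=\overline{U}\cap M$ of $x^*$ in $\overline{U}$, with $M$ open in $X$, the set $M$ is a neighborhood of $x^*$ in $X$ and hence meets $V_n$ for infinitely many $n$; the elementary inclusion
\[
M\cap V_n\;=\;M\cap U\cap U_n\;\subseteq\;\overline{U}\cap M\cap U_n\;=\;N\cap W_n
\]
then shows $N$ meets $W_n$ for infinitely many $n$. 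Thus $x^*$ is a cluster point of $(W_n)$ in $\overline{U}$, and (a) concludes that $\overline{U}$ is pseudocompact.

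The main, but modest, obstacle is the bookkeeping when transferring cluster points between the ambient space $X$ and the subspace $\overline{U}$: one must lift the open sets of $\overline{U}$ to open subsets $V_n\subseteq U$ of $X$ without losing non-emptiness (which is where density of $U$ in $\overline{U}$ enters) and then push the cluster point back to $\overline{U}$ via the inclusion $M\cap V_n\subseteq N\cap W_n$.
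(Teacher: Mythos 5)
Your proof is correct and follows essentially the same route as the paper: the identical reduction to showing that the closure of an open functionally bounded set $U$ is pseudocompact, together with the same transfer of a family of open subsets of $\overline{U}$ to open subsets of $X$ contained in $U$ (using density of $U$ in $\overline{U}$ and closedness of $\overline{U}$ in $X$). The only difference is packaging: the paper argues by contradiction via Engelking's discrete-family criterion and an explicit unbounded continuous function $\sum_n f_n$, whereas you contrapose both halves into the standard cluster-point characterizations (a) and (b), whose proofs are precisely that construction.
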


\begin{proof}
The necessity is trivial. To prove the sufficiency, assume that $X$ is locally functionally bounded. We show that $X$ is locally pseudocompact. Let $x\in X$ and let $U$ be a  functionally bounded neighborhood of $x$. Since subsets and closures of functionally bounded sets are functionally bounded in $X$, we assume that $U$ is open and check that $\overline{U}$ is pseudocompact. Assuming the converse we could find, by Theorem 3.10.22 of \cite{Eng},  a sequence $\{W_n\}_{n\in\w}$ of open in $\overline{U}$ sets which is  discrete in $\overline{U}$. Put $U_n=W_n\cap U$ and choose a point $x_n\in U_n$. As $\{W_n\}_{n\in\w}$ is discrete in the closed subset $\overline{U}$ of $X$, we obtain that the sequence $\{U_n\}_{n\in\w}$  is discrete in $X$. For every $n\in\omega$, choose a continuous function $f_n:X\to [0,n]$ such that $X\setminus f^{-1}_n(0)\subseteq U_n$ and $f_n(x_n)=n$. Then the function $f=\sum_{n\in\omega}f_n$ is well-defined, continuous and unbounded on $\overline{U}$. Thus $\overline{U}$ is not functionally bounded in $X$, a contradiction.\qed
\end{proof}

The equivalence of (i) and (ii) in the next theorem was proved by Hu\v{s}ek \cite[Theorem~1]{Husek}, our proof  completely differs from Hu\v{s}ek's one.
\begin{theorem} \label{t:kR-product-mu}
For a space $Y$ the following assertions are equivalent:
\begin{enumerate}
\item[{\rm (i)}] $Y$ is a locally pseudocompact $k_\IR$-space;
\item[{\rm (ii)}] $X\times Y$ is a $k_\IR$-space for each $k_\IR$-space $X$;
\item[{\rm (iii)}] $X\times Y$ is a $k_\IR$-space space for each Fr\'{e}chet--Urysohn space $X$.
\end{enumerate}
\end{theorem}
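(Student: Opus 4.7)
I handle the easy directions first and concentrate on $(i)\Rightarrow(ii)$.

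\emph{(ii)$\Rightarrow$(iii) and (iii)$\Rightarrow$(i).} Every Fr\'echet--Urysohn space is sequential and hence a $k_\IR$-space by the diagram in the Introduction, so (ii) immediately implies (iii). For (iii)$\Rightarrow$(i), taking $X$ to be a one-point space gives that $Y$ itself is a $k_\IR$-space; and if $Y$ failed to be locally pseudocompact, then Lemma~\ref{l:lpc=lfb} would make it not locally functionally bounded, and Proposition~\ref{p:product-Ascoli} would produce a Fr\'echet--Urysohn fan $X_0$ with $X_0\times Y$ not Ascoli, hence not a $k_\IR$-space, contradicting~(iii).

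\emph{(i)$\Rightarrow$(ii).} Using Theorem~\ref{t:characterization-kR}, fix an $R$-quotient map $p\colon Z\to X$ with $Z=\bigoplus_\alpha K_\alpha$ a topological sum of compact spaces. By Theorem~\ref{p:lc-kR=kR} each $K_\alpha\times Y$ is a $k_\IR$-space, so $Z\times Y=\bigoplus_\alpha(K_\alpha\times Y)$ is a $k_\IR$-space, and Theorem~\ref{t:characterization-kR} reduces the proof to showing that $p\times\Id_Y\colon Z\times Y\to X\times Y$ is $R$-quotient. So fix $\phi\colon X\times Y\to\IR$ with $\psi:=\phi\circ(p\times\Id_Y)$ continuous, and pick $(x_0,y_0)\in X\times Y$. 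Using local pseudocompactness, choose an open neighborhood $V$ of $y_0$ in $Y$ with $\overline{V}^Y$ pseudocompact, and set $K:=\overline{\overline{V}^Y}^{\DD Y}$; since pseudocompact subsets of $Y$ have compact closure in $\DD Y$ (coinciding with their Stone--\v{C}ech compactification), $K$ is compact. Applying Lemma~\ref{l:ext-prod-comp} on each summand $K_\alpha$, extend $\psi$ to a continuous function $\hat\psi\colon Z\times\DD Y\to\IR$. Whenever $p(z_1)=p(z_2)$, the equality $\psi(z_1,\cdot)=\psi(z_2,\cdot)$ holds on the dense subset $Y$ of $\DD Y$; continuity of $\hat\psi$ propagates the equality to all of $\DD Y$, so $\hat\psi$ descends to a (not yet known to be continuous) function $\tilde\phi\colon X\times\DD Y\to\IR$ with $\hat\psi=\tilde\phi\circ(p\times\Id_{\DD Y})$. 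By Theorem~\ref{t:rq-prod}, $p\times\Id_K$ is $R$-quotient (here $K$ is compact, hence locally compact), and $\tilde\phi|_{X\times K}\circ(p\times\Id_K)=\hat\psi|_{Z\times K}$ is continuous; hence $\tilde\phi|_{X\times K}$ is continuous. Since $\phi$ agrees with $\tilde\phi$ on the open subset $X\times V\subseteq X\times K$ and $(x_0,y_0)\in X\times V$, the desired continuity of $\phi$ at $(x_0,y_0)$ follows.

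The main obstacle is that Theorem~\ref{t:rq-prod} preserves $R$-quotient maps only under products with \emph{locally compact} factors, while $Y$ is merely locally pseudocompact. The device that resolves this is to work locally on $V$ and pass to the compact hull $K\subseteq\DD Y$; Lemma~\ref{l:ext-prod-comp} transports $\psi$ to this hull, and the density argument ensures the extended function still factors through $p$, at which point the locally compact version of Theorem~\ref{t:rq-prod} applies.
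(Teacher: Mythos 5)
Your proof is correct, but for the main implication (i)$\Rightarrow$(ii) it takes a genuinely different route from the paper. The paper works with an arbitrary $k$-continuous function $f$ on $X\times Y$ directly: it forms the locally compact subspace $L$ of all points of $\DD Y$ having a compact neighborhood (so that $Y$ is dense and $C$-embedded in $L$), extends $f$ to $\hat f$ on $X\times L$ continuous in the second variable, verifies via Lemma~\ref{l:ext-prod-comp} and Theorem~\ref{p:lc-kR=kR} that $\hat f$ is $k$-continuous, and concludes from the fact that $X\times L$ is a $k_\IR$-space (again Theorem~\ref{p:lc-kR=kR}). You instead route everything through the $R$-quotient characterization: you resolve $X$ by a sum of compacta $Z$ via Theorem~\ref{t:characterization-kR}, observe $Z\times Y$ is $k_\IR$, and reduce to showing $p\times\Id_Y$ is $R$-quotient, which you do locally in $Y$ by passing to a compact hull $K\subseteq\DD Y$ of a pseudocompact neighborhood and invoking Theorem~\ref{t:rq-prod} for the compact factor $K$. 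Both arguments lean on the same two ingredients (Lemma~\ref{l:ext-prod-comp} and the locally compact product theorem), but yours in effect proves a local-pseudocompact strengthening of Theorem~\ref{t:rq-prod} in this special case, and it never has to handle $k$-continuous functions on $X\times Y$ explicitly; the paper's version avoids the descent/factorization step and is arguably more self-contained at the level of functions. One small imprecision: $X\times V$ is open in $X\times Y$, not in $X\times K$ (points of $K\setminus Y$ may lie in the trace of an open set defining $V$). This does not harm the argument --- the restriction of the continuous $\tilde\phi{\restriction}_{X\times K}$ to $X\times V$ is continuous, $V$ carries the same subspace topology whether induced from $K$, $Y$ or $\DD Y$, and $X\times V$ is an open neighborhood of $(x_0,y_0)$ in $X\times Y$ on which $\phi$ and $\tilde\phi$ agree --- but you should phrase it that way.
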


\begin{proof}
(i)$\Rightarrow$(ii) Consider the following space
\[
L:=\{x \in \DD Y: x \mbox{  has a compact neighborhood in the space } \DD Y\}.
\]
Then $L$ is a locally compact space, $Y$ is dense and $C$-embedded in $L$.
Let $f: X\times Y\to\IR$ be a $k$-continuous function. Since $X$ and $Y$ are $k_\IR$-spaces, the function $f$ is separately continuous. Let $\hat f: X\times L\to \IR$ be an extension of $f$ such that $\hat f(x,\cdot)$ is continuous for any $x\in X$.

Let us show that $\hat f$ is $k$-continuous. Take a compact subset $K\subseteq X$. We need to prove that $\hat f{\restriction}_{K \times L}$ is continuous. It follows from Proposition \ref{p:lc-kR=kR} that $K\times Y$ is a $k_\IR$-space. Therefore, the function $f{\restriction}_{K\times Y}$ is continuous.
It follows from Lemma \ref{l:ext-prod-comp} that the function $f{\restriction}_{K\times Y}$ is extended to a continuous function $g: K\times \DD Y\to \IR$. Hence, $\hat f{\restriction}_{K \times L}=g{\restriction}_{K \times L}$ is continuous.

It follows from Proposition \ref{p:lc-kR=kR} that $ X\times L$ is a $k_\IR$-space. Since $\hat f$ is $k$-continuous, it follows that $\hat f$ is continuous. Hence, $f$ is continuous.

(ii)$\Rightarrow$(iii) is trivial.

(iii)$\Rightarrow$(i) By Proposition \ref{p:product-Ascoli} and Lemma \ref{l:lpc=lfb}, the space $Y$ is locally pseudocompact. Since the product $X\times Y$ is a $k_\IR$-space space for a one point space $X$, we obtain that  $Y$ is a $k_\IR$-space.\qed
\end{proof}

Now we consider the case of $s_\IR$-spaces.

\begin{remark} \label{rem:sR-perman} {\em
(i) It follows from (i) of Remark \ref{rem:kR-perman} that in general the product of two $s_\IR$-spaces may be not an $s_\IR$-space.

(ii) The compact space $K=\w_1+1$ is not an $s_\IR$-space. Indeed, consider the function $f:X\to \IR$ defined by $f(\lambda):=0$ if $\lambda<\w_1$, and  $f(\w_1)=1$. Clearly, $f$ is $s$-continuous and discontinuous. Thus $K$ is not an $s_\IR$-space. 

(iii) The space $\IR^{\w_1}$ is an $s_\IR$-space (Proposition \ref{p:noble:fc-prod}). The compact space $K=\w_1+1$ has weight $\w_1$ by Theorem 3.1.21 of \cite{Eng}. Therefore, by Theorem 3.2.5 of \cite{Eng},  $K$ embeds into $[0,1]^{\w_1}$  and hence into $\IR^{\w_1}$. By (ii), $K$ is not  an $s_\IR$-space. Therefore  $s_\IR$-spaces may contain compact subspaces which are not  $s_\IR$-spaces.
}
\end{remark}


\begin{theorem} \label{t:lc-sR=sR}
Let $Y$ be a locally  compact $s_\IR$-space, and let $X$ be an $s_\IR$-space. Then $X\times Y$ is an $s_\IR$-space.
\end{theorem}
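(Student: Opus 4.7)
The plan is to mimic the proof of Theorem~\ref{p:lc-kR=kR}, replacing Theorem~\ref{t:characterization-kR} by Theorem~\ref{t:characterization-sR} throughout. Since $X$ and $Y$ are both $s_\IR$-spaces, Theorem~\ref{t:characterization-sR} supplies metrizable locally compact spaces $Z$ and $W$ together with $R$-quotient maps $p\colon Z\to X$ and $q\colon W\to Y$. The product $Z\times W$ is again metrizable and locally compact, hence first countable, hence sequential, and therefore an $s_\IR$-space (by the implication diagram in the introduction).

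Next, I would realize the product map $p\times q\colon Z\times W\to X\times Y$ as the composition
\[
Z\times W \xrightarrow{\Id_Z\times q} Z\times Y \xrightarrow{p\times\Id_Y} X\times Y.
\]
The map $\Id_Z\times q$ is $R$-quotient by Theorem~\ref{t:rq-prod} applied to the $R$-quotient map $q$ and the locally compact space $Z$ (after a harmless swap of factors), while $p\times\Id_Y$ is $R$-quotient by Theorem~\ref{t:rq-prod} applied to $p$ and the locally compact space $Y$; this is the only place where the local compactness of $Y$ enters. A direct check from the definition shows that the composition of two $R$-quotient maps is $R$-quotient: if $h\colon X\times Y\to\IR$ is a function such that $h\circ(p\times q)$ is continuous, then $R$-quotientness of $\Id_Z\times q$ yields that $h\circ(p\times\Id_Y)$ is continuous, and then $R$-quotientness of $p\times\Id_Y$ yields that $h$ is continuous.

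Thus $p\times q$ is an $R$-quotient map from the $s_\IR$-space $Z\times W$ onto $X\times Y$, and Theorem~\ref{t:characterization-sR}(iii) delivers that $X\times Y$ is an $s_\IR$-space. There is no genuine obstacle here: the heavy lifting is already done by Theorem~\ref{t:characterization-sR} and Theorem~\ref{t:rq-prod}. The only delicate point is to keep track of which factor must be locally compact in each invocation of Theorem~\ref{t:rq-prod}, and the hypothesis that $Y$ (in addition to being an $s_\IR$-space) is locally compact is precisely what makes the second invocation legal.
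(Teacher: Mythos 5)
Your proposal is correct and follows essentially the same route as the paper's proof: both factor the passage from $Z\times W$ to $X\times Y$ through $Z\times Y$, apply Theorem~\ref{t:rq-prod} twice (using local compactness of $Z$ and then of $Y$), and conclude via Theorem~\ref{t:characterization-sR}(iii). The only cosmetic difference is that you compose the two $R$-quotient maps and invoke Theorem~\ref{t:characterization-sR} once, while the paper applies it at the intermediate stage as well.
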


\begin{proof}
By Theorem \ref{t:characterization-sR}, there is an $R$-quotient mapping $p:Z\to X$, where $Z$ is a metrizable locally compact space.

We claim that the locally compact space $Z\times Y$ is an $s_\IR$-space. Indeed, since $Y$ is an $s_\IR$-space, Theorem \ref{t:characterization-sR} implies that there exists an $R$-quotient mapping $q:Q\to Y$, where $Q$ is a metrizable locally compact space. Therefore, by Theorem \ref{t:rq-prod}, the product mapping $\Id_Z\times q:Z\times Q\to Z\times Y$ is $R$-quotient. Clearly, $Z\times Q$ is also a metrizable locally compact space, and hence it is an $s_\IR$-space.  Now Theorem \ref{t:characterization-sR} implies that  $Z\times Y$ is an $s_\IR$-space, as desired.

By Theorem \ref{t:rq-prod}, the product mapping $p\times \Id_Y:Z\times Y\to X\times Y$ is $R$-quotient. Once more applying Theorem \ref{t:characterization-sR} we obtain that $X\times Y$ is an $s_\IR$-space.\qed
\end{proof}

The next result is analogous to Theorem \ref{t:kR-product-mu}. It is worth mentioning that Theorem 5.2 of \cite{Nob} implies that if $X$ is an $s_\IR$-space and $Y$ is a pseudocompact $s_\IR$-space, then the product $X\times Y$ is an $s_\IR$-space.


\begin{theorem} \label{p:sR-product}
For a space $Y$ the following assertions are equivalent:
\begin{enumerate}
\item[{\rm (i)}] $Y$ is a locally pseudocompact $s_\IR$-space;
\item[{\rm (ii)}] $X\times Y$ is an $s_\IR$-space for each $s_\IR$-space $X$;
\item[{\rm (iii)}] $X\times Y$ is an $s_\IR$-space space for each Fr\'{e}chet--Urysohn space $X$.
\end{enumerate}
\end{theorem}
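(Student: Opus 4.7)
The theorem is the $s_\IR$-analog of Theorem \ref{t:kR-product-mu}, so I would model the proof on the latter. The implication (ii)$\Ra$(iii) is immediate since every Fréchet--Urysohn space is sequential and every sequential space is an $s_\IR$-space. For (iii)$\Ra$(i), taking $X$ to be a singleton in (iii) shows that $Y$ is an $s_\IR$-space, while combining Proposition \ref{p:product-Ascoli} with Lemma \ref{l:lpc=lfb} shows that $Y$ must also be locally pseudocompact: otherwise there would exist a Fréchet--Urysohn fan $X$ with $X\times Y$ not Ascoli, contradicting (iii) since every $s_\IR$-space is Ascoli.

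The main content is (i)$\Ra$(ii), for which my plan is to mimic the proof of Theorem \ref{t:kR-product-mu}(i)$\Ra$(ii) but with the Hewitt realcompactification $\upsilon Y$ in place of the Dieudonné completion $\DD Y$, so that the ``moreover'' clause of Lemma \ref{l:ext-prod-comp} becomes available. I set
\[
L:=\{y\in \upsilon Y:\ y\ \text{has a compact neighbourhood in}\ \upsilon Y\}
\]
and verify the basic properties: $L$ is locally compact, $Y\subseteq L$ (using Lemma \ref{l:lpc=lfb} together with the fact that a pseudocompact subset of $Y$ has compact closure in $\upsilon Y$, since $Y$ is $C$-embedded in $\upsilon Y$), $Y$ is dense in $L$, and $Y$ is $C$-embedded in $L$. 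Given an $s$-continuous $f:X\times Y\to \IR$, the $s_\IR$-property of $X$ and $Y$ yields its separate continuity; hence for each $x\in X$ the map $f(x,\cdot):Y\to \IR$ extends uniquely to a continuous map on $L$, producing $\hat f:X\times L\to \IR$ with $\hat f(x,\cdot)$ continuous for every $x$.

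The next step is to verify that $\hat f$ is $s$-continuous on $X\times L$. For any convergent sequence $\mathbf{s}\subseteq X$, the space $\mathbf{s}$ is a metrizable compact (hence locally compact) $s_\IR$-space, so Theorem \ref{t:lc-sR=sR} gives that $\mathbf{s}\times Y$ is an $s_\IR$-space, whence $f|_{\mathbf{s}\times Y}$ is continuous. By the ``moreover'' clause of Lemma \ref{l:ext-prod-comp} applied to the metrizable factor $\mathbf{s}$, this restriction extends to a continuous $g:\mathbf{s}\times \upsilon Y\to \IR$. Since $\hat f|_{\mathbf{s}\times L}$ and $g|_{\mathbf{s}\times L}$ are both continuous in the second variable and agree on the dense subset $\mathbf{s}\times Y$, they coincide, so $\hat f|_{\mathbf{s}\times L}$ is continuous for every convergent sequence $\mathbf{s}\subseteq X$, delivering the $s$-continuity of $\hat f$.

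The final step would conclude that $\hat f$ is continuous, and therefore $f=\hat f|_{X\times Y}$ is continuous, showing $X\times Y$ is an $s_\IR$-space. This is precisely where the $s_\IR$-case diverges from the $k_\IR$-case: Proposition \ref{p:lc-kR=kR} hands a $k_\IR$-product for free from local compactness of the second factor, whereas Theorem \ref{t:lc-sR=sR} additionally demands that $L$ itself be an $s_\IR$-space. The main obstacle of the argument is therefore to prove that the locally compact hull $L$ inherits the $s_\IR$-property from its dense $C$-embedded $s_\IR$-subspace $Y$; the natural approach is to take an arbitrary $s$-continuous $h:L\to \IR$, observe that $h|_Y$ is continuous by the $s_\IR$-property of $Y$, extend it continuously to $L$ via the $C$-embedding, and use the compact-neighbourhood structure of $L$ together with the density of $Y$ to argue that this extension actually agrees with $h$ everywhere. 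Once this has been established, Theorem \ref{t:lc-sR=sR} yields $X\times L$ an $s_\IR$-space, the $s$-continuous $\hat f$ is forced to be continuous, and the proof is complete.
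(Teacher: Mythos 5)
Your treatment of (ii)$\Ra$(iii), of (iii)$\Ra$(i), and of the first two thirds of (i)$\Ra$(ii) matches the paper's proof almost step for step: the paper also forms the locally compact part $L$ of a completion of $Y$ (it uses $\DD Y$ rather than $\upsilon Y$, which changes nothing essential), extends $f$ to $\hat f:X\times L\to\IR$ fibrewise, and uses Theorem \ref{t:lc-sR=sR} together with Lemma \ref{l:ext-prod-comp} to see that $\hat f{\restriction}_{K\times L}$ is continuous for every compact metrizable $K\subseteq X$. The problem is your final step.

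You propose to finish by showing that $L$ is itself an $s_\IR$-space, so that Theorem \ref{t:lc-sR=sR} makes $X\times L$ an $s_\IR$-space and the $s$-continuity of $\hat f$ forces its continuity. This claim is false in general, so this route cannot be repaired. Take $Y=\w_1$: it is locally compact, hence locally pseudocompact, and first countable, hence sequential and therefore an $s_\IR$-space; but $\DD Y=\upsilon Y=\w_1+1$, so $L=\w_1+1$, which is \emph{not} an $s_\IR$-space by Remark \ref{rem:sR-perman}(ii). Your suggested argument for the claim also breaks down exactly where one would expect: given an $s$-continuous $h:L\to\IR$, the continuous extension of $h{\restriction}_Y$ need not agree with $h$ at points of $L\setminus Y$, because such points need not be limits of sequences from $Y$ (in the example, $\w_1$ is not the limit of any sequence from $\w_1$). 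The paper avoids the issue entirely: it fixes $(x_0,y_0)\in X\times L$, takes a compact neighbourhood $S=\overline U$ of $y_0$ in $L$, and passes to the map $\varphi:X\to C(S)$, $\varphi(x)(y)=\hat f(x,y)$, where $C(S)$ carries the sup-norm. The continuity of $\hat f$ on $K\times S$ for every compact metrizable $K\subseteq X$ makes $\varphi$ $s$-continuous, and then only the $s_\IR$-property of $X$ (applied to the Banach-space-valued map $\varphi$) is invoked to get $\varphi$, hence $\hat f{\restriction}_{X\times S}$, continuous. No $s_\IR$-type hypothesis on $L$ is ever needed. You should replace your last paragraph by this localization-plus-exponential-law argument.
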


\begin{proof}
(i)$\Rightarrow$(ii) Set
\[
L:=\{x \in \DD Y: x \mbox{ has a compact neighborhood in the space } \DD Y\}.
\]
Then $L$ is a locally compact space, $Y$ is dense and $C$-embedded in $L$. Let $f: X\times Y\to\IR$ be an $s$-continuous function. Since $X$ and $Y$ are $s_\IR$-spaces, the function $f$ is separately continuous. Let $\hat f: X\times L\to \IR$ be an extension of $f$, such that $\hat f(x,\cdot)$ is continuous for any $x\in X$.

Let us show that $\hat f{\restriction}_{K \times L}$ is continuous for any compact metrizable $K\subseteq X$.
It follows from Theorem \ref{t:lc-sR=sR} that $K\times Y$ is an $s_\IR$-space.
Therefore, the $s$-continuous function $f{\restriction}_{K\times Y}$ is continuous.
It follows from Lemma \ref{l:ext-prod-comp} that the function $f{\restriction}_{K\times Y}$ is extended to a continuous function $g: K\times \DD Y\to \IR$. Hence, $\hat f{\restriction}_{K \times L}=g{\restriction}_{K \times L}$ is continuous.

Let us show that $\hat f$ is continuous. Fix an arbitrary point $(x_0,y_0)\in X\times L$ and check that $\hat f$ is continuous at $(x_0,y_0)$. Let $U$ be a neighborhood of the point $y_0$ such that $S:=\overline{U}$ is compact. It suffices to verify that $\hat f{\restriction}_{X\times S}$ is continuous.
Put
\[
\varphi: X\to C(S),\; \varphi(x)(y)=\hat f(x,y).
\]
Since $\hat f{\restriction}_{K \times S}$ is continuous for any compact metrizable $K\subset X$, then $\varphi$ is $s$-continuous. As $X$ is an $s_\IR$-space, it follows that $\varphi$ is continuous. Therefore, $\hat f{\restriction}_{X\times S}$ is continuous and hence the function $\hat f$ is continuous.

Thus, the function $f=\hat f{\restriction}_{X\times Y}$ is continuous, too.
\smallskip

(ii)$\Rightarrow$(iii) is trivial.
\smallskip

(iii)$\Rightarrow$(i) By Theorem \ref{t:kR-product-mu},  $Y$ is locally pseudocompact $k_\IR$-space. By (iii) applied to a one point space $X$,  it follows that  $Y$ is an $s_\IR$-space.\qed
\end{proof}

The next theorem is an analogue of Theorem \ref{t:Acoli-scf} and Theorem \ref{t:kR-prod-compact}.
\begin{theorem} \label{t:sR-prod-compact}
For a space $X$, the following assertions are equivalent:
\begin{enumerate}
\item[{\rm(i)}] $X$ is an $s_\IR$-space;
\item[{\rm(ii)}] for every locally compact sequential space $K$, the product $X\times K$ is an $s_\IR$-space;
\item[{\rm(iii)}] for every locally compact sequential space $K$,  each $s$-continuous function $f:X\times K\to\IR$ is continuous.
\end{enumerate}
\end{theorem}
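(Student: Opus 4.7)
The plan is to deduce Theorem~\ref{t:sR-prod-compact} almost entirely from results already proved in the paper, with the main content concentrated in (i)$\Rightarrow$(ii). First I would handle the two easy equivalences. The equivalence (ii)$\Leftrightarrow$(iii) is immediate from the very definition of an $s_\IR$-space: $X\times K$ is an $s_\IR$-space precisely when every $s$-continuous real-valued function on it is continuous. For (iii)$\Rightarrow$(i) (or (ii)$\Rightarrow$(i)), take $K$ to be a one-point space, which is trivially locally compact and sequential; then $X\times K\cong X$ and the assertion reduces to the definition of an $s_\IR$-space.

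The substantive step is (i)$\Rightarrow$(ii). Here I would invoke Theorem~\ref{t:lc-sR=sR}, which says that the product of an $s_\IR$-space with a locally compact $s_\IR$-space is an $s_\IR$-space. To apply it, I only need to verify that the locally compact sequential space $K$ is itself an $s_\IR$-space. This is part of the basic hierarchy recorded in the introduction (the diagram in which every sequential space is an $s_\IR$-space): if $f:K\to\IR$ is $s$-continuous and $A\subseteq K$ is not closed, sequentiality gives a sequence in $A$ converging to some $a\in\overline{A}\setminus A$, and $s$-continuity of $f$ forces $f(a)$ to lie in $\overline{f(A)}$, so $f$ is continuous. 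Hence $K$ is a locally compact $s_\IR$-space, and Theorem~\ref{t:lc-sR=sR} yields that $X\times K$ is an $s_\IR$-space.

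No step here is really an obstacle since the heavy lifting has already been carried out in Theorem~\ref{t:lc-sR=sR}, whose proof uses the $R$-quotient characterization of $s_\IR$-spaces (Theorem~\ref{t:characterization-sR}) together with the Okuyama--Parkhomenko result (Theorem~\ref{t:rq-prod}) on the stability of $R$-quotient maps under products with a locally compact factor. The only point requiring a brief remark is the observation that sequentiality implies the $s_\IR$-property; I would make this explicit to keep the argument self-contained. The overall structure of the proof is therefore short: (ii)$\Leftrightarrow$(iii) by definition, (ii)$\Rightarrow$(i) by specializing to a point, and (i)$\Rightarrow$(ii) by combining the sequential $\Rightarrow$ $s_\IR$ implication with Theorem~\ref{t:lc-sR=sR}.
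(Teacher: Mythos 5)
Your proposal is correct and follows exactly the paper's route: (ii)$\Leftrightarrow$(iii) by definition, (ii)$\Rightarrow$(i) via a one-point $K$, and (i)$\Rightarrow$(ii) from Theorem~\ref{t:lc-sR=sR}. The only difference is that you make explicit the (true, and needed) observation that a sequential space is an $s_\IR$-space, which the paper leaves to the diagram in the introduction.
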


\begin{proof}
The implication (i)$\Ra$(ii) follows from Theorem \ref{t:lc-sR=sR}, and the implication (ii)$\Ra$(i) is trivial (take $K$ to be a one-point space). The equivalence (ii)$\LRa$(iii) is valid by the definition of $s_\IR$-spaces.\qed
\end{proof}

Theorem \ref{t:kR-product-mu} and Theorem \ref{p:sR-product}  motivate the following problem.

\begin{problem}
Let $X$ be a (locally) pseudocompact Ascoli space (for example, $X=\omega_1$), and let $Y$ be an Ascoli space. Is then $X\times Y$ Ascoli?
\end{problem}
Note that, by Proposition 2.7 of \cite{Gabr-pseudo-Ascoli}, the product of an arbitrary family of  pseudocompact Ascoli spaces is a pseudocompact Ascoli space.


\section{A characterization of $k_\IR$-spaces} \label{sec:k-R-spaces}


In the next theorem we show that in the definition of $k_\IR$-spaces one can replace $k$-continuous {\em functions} on $X$  by $k$-continuous {\em linear maps} defined on some compact sets of measures.
 To this end, we need some notations.
Recall (see Corollary 7.6.5(a) of \cite{Jar}) that the dual space of $\CC(Z)$ is the space $M_c(Z)$ of all regular Borel measures on $Z$ with compact support.
A  linear form $\chi$ on $M_c(Z)$ is called {\em $\gamma$-continuous} if for every equicontinuous weak$^\ast$  closed disk $D$ in $M_c(Z)$, the restriction $\chi{\restriction}_D$ is weak$^\ast$ continuous.

\begin{theorem} \label{t:lcs-Z-kR}
For a Tychonoff space $Z$, the following assertions are equivalent:
\begin{enumerate}
\item[{\rm(i)}] $Z$ is a $k_\IR$-space;
\item[{\rm(ii)}] each linear form $\chi$ on $M_c(Z)$ which is continuous on each weak$^\ast$ compact subset of the form
\begin{equation} \label{equ:p:lcs-Z-kR-1}
B_{C(K)^\ast}= \{\mu\in M_c(Z): \supp(\mu)\subseteq K \mbox{ and } \|\mu\|\leq 1\}, \;\; \mbox{ where $K\in \KK(Z)$, }
\end{equation}
 is weak$^\ast$ continuous {\rm(}i.e. $\chi\in C(Z)${\rm)}.
\end{enumerate}
\end{theorem}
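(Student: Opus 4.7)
The plan is to treat the two implications separately, linked by the canonical embedding $z\mapsto \delta_z$ of $Z$ into $M_c(Z)$ and by the Krein--Milman description of $B_{C(K)^\ast}$ as the weak$^\ast$-closure of the absolutely convex hull of $\{\delta_z:z\in K\}$.

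For (i)$\Rightarrow$(ii), I would let $\chi$ be a linear form on $M_c(Z)$ which is weak$^\ast$-continuous on every set of the form (\ref{equ:p:lcs-Z-kR-1}), and define $f(z):=\chi(\delta_z)$. For any compact $K\subseteq Z$, the map $z\mapsto \delta_z$ from $K$ into $B_{C(K)^\ast}$ is weak$^\ast$-continuous since $\int g\,d\delta_z=g(z)$ for every $g\in C(Z)$; hence $f{\restriction}_K=\chi\circ (z\mapsto \delta_z)$ is continuous, so $f$ is $k$-continuous, and by (i) it is continuous. It remains to show that $\chi$ agrees with the weak$^\ast$-continuous form $\tilde\chi:\mu\mapsto \int f\,d\mu$. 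Both are linear and coincide on every $\delta_z$, hence on $\acx\{\delta_z:z\in K\}$; by weak$^\ast$-continuity on $B_{C(K)^\ast}$ and the Krein--Milman identification that $B_{C(K)^\ast}$ equals the weak$^\ast$-closure of this absolutely convex hull, they coincide on all of $B_{C(K)^\ast}$. Since every $\mu\in M_c(Z)$ is a scalar multiple of an element of $B_{C(\supp(\mu))^\ast}$, linearity propagates $\chi=\tilde\chi$ to all of $M_c(Z)$, giving $\chi\in C(Z)$.

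For (ii)$\Rightarrow$(i), I would take a $k$-continuous $f:Z\to\IR$ and define $\chi(\mu):=\int f\,d\mu$; this is well defined because $f$ is continuous on the compact set $\supp(\mu)$, and clearly linear in $\mu$. To apply (ii), I need $\chi{\restriction}_{B_{C(K)^\ast}}$ to be weak$^\ast$-continuous for every compact $K\subseteq Z$. By Tietze's extension theorem, every $g\in C(K)$ extends to some $\tilde g\in C(Z)$, so the topology on $B_{C(K)^\ast}$ induced from $M_c(Z)=\CC(Z)^\ast$ coincides with the classical weak$^\ast$ topology on the unit ball of $C(K)^\ast$, in which $\mu\mapsto \int f{\restriction}_K\,d\mu$ is continuous. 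Hypothesis (ii) then yields $\chi\in C(Z)$; since $\chi(\delta_z)=f(z)$, the function $f$ itself must be continuous, proving (i).

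The main obstacle is the argument in the first direction that $\chi$ is \emph{globally} determined by its restrictions to the $\delta_z$'s: the hypothesis controls $\chi$ only on each ball $B_{C(K)^\ast}$ separately, and the Krein--Milman identification (or, equivalently, an approximation by finitely-supported atomic measures of uniformly bounded total variation) is what forces $\chi=\tilde\chi$ on the whole dual. A secondary delicate point is the matching of the two weak$^\ast$ topologies on $B_{C(K)^\ast}$ used in the second direction, which is straightforward via Tietze but deserves explicit mention.
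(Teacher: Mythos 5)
Your proof is correct, but it follows a genuinely different route from the paper's. The paper deduces the theorem in a few lines from two pieces of heavy machinery: the classical fact that $Z$ is a $k_\IR$-space iff $\CC(Z)$ is complete (Theorem \ref{t:kR-classical}), combined with Grothendieck's Completeness Theorem, which says that $\CC(Z)$ is complete iff every $\gamma$-continuous linear form on $M_c(Z)$ lies in $C(Z)$; the only real work in the paper is the observation that the equicontinuous weak$^\ast$ closed disks of $M_c(Z)$ are, up to scaling, exactly the polars of the sets $[K;\e]$, i.e.\ the balls $B_{C(K)^\ast}$, so that $\gamma$-continuity coincides with the hypothesis in (ii). You instead give a direct, essentially self-contained argument: for (i)$\Rightarrow$(ii) you recover the candidate function as $f(z)=\chi(\delta_z)$, use (i) to get its continuity, and then propagate $\chi=\int f\,d(\cdot)$ from the Dirac measures to all of $B_{C(K)^\ast}$ via the Arens--Kelley/Krein--Milman fact that $B_{C(K)^\ast}$ is the weak$^\ast$ closed absolutely convex hull of $\{\delta_z:z\in K\}$, and then to all of $M_c(Z)$ by homogeneity; for (ii)$\Rightarrow$(i) you check directly that $\mu\mapsto\int f\,d\mu$ is continuous on each ball, using $C^\ast$-embeddedness of compact subsets to match the two weak$^\ast$ topologies on $B_{C(K)^\ast}$ — a point the paper never needs to address because Grothendieck's theorem is stated intrinsically in terms of equicontinuous disks. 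Your approach buys transparency and avoids the completeness characterization altogether, at the price of invoking the extreme-point description of $B_{C(K)^\ast}$ (which you should cite explicitly) and the duality $(M_c(Z),w^\ast)'=C(Z)$; the paper's approach buys brevity and makes clear that the theorem is really a reformulation of Grothendieck's completeness criterion for $\CC(Z)$. Both arguments are sound.
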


\begin{proof}
Let a linear form $\chi$ on $M_c(Z)$ be continuous on each weak$^\ast$ compact subset of the form (\ref{equ:p:lcs-Z-kR-1}). Observe that any  equicontinuous weak$^\ast$  closed disk $D$ in $M_c(Z)$ is contained in $U^\circ$ for some neighborhood $U$ of zero in $\CC(Z)$. On the other hand, each set of the form  $U^\circ$ is an equicontinuous weak$^\ast$  closed disk. So we can assume that $\chi$ is $\gamma$-continuous if and only if  $\chi{\restriction}_{U^\circ}$ is weak$^\ast$ continuous for every $U\in \Nn_0\big(\CC(Z)\big)$. Moreover, we can assume additionally that $U$ has the form
\[
U=[K;\e] :=\{ f\in C(Z): K \mbox{ is compact in $Z$ and } f(K)\subseteq (-\e,\e)\big\}.
\]
Therefore
\[
U^\circ=\{ \mu\in M_c(Z): \supp(\mu)\subseteq K \mbox{ and } \|\mu\|\leq \tfrac{1}{\e}\big\}.
\]
Taking into account the homogeneity of locally convex spaces, we can say the following: A  linear form $\chi$ on $M_c(Z)$ is $\gamma$-continuous if, and only if, the restriction $\chi{\restriction}_{B_{C(K)^\ast}}$ is weak$^\ast$ continuous, where $K\in\KK(Z)$ and  $B_{C(K)^\ast}$ denotes the closed unit ball of the dual space of the Banach space $C(K)$ endowed with the weak$^\ast$ topology.

By Theorem \ref{t:kR-classical},  $Z$ is a $k_\IR$-space if and only if the space $\CC(Z)$ is complete. By Grothendieck's Completeness Theorem 9.2.2 of \cite{Jar}, $\CC(Z)$ is complete if and only if each $\gamma$-continuous linear form on $\CC(Z)'=M_c(Z)$ belongs to $\CC(Z)$. Now the theorem follows from the discussion in the previous paragraph.\qed
\end{proof}

\bibliographystyle{amsplain}

\begin{thebibliography}{10}


\bibitem{Arhangel}
A.V. Arhangel'skii, \emph{Topological function spaces}, Math. Appl. \textbf{78}, Kluwer Academic Publishers, Dordrecht, 1992.


\bibitem{Banakh-Survey}
T. Banakh, Fans and their applications in General Topology, Functional Analysis and Topological Algebra,  arXiv:1602.04857.

\bibitem{BG}
T. Banakh, S. Gabriyelyan,  On the $\CC$-stable closure of the class of (separable) metrizable spaces, Monatshefte Math.   \textbf{180} (2016), 39--64.

\bibitem{Blasco-77}
J.L. Blasco, On $\mu$-spaces and $k_R$-spaces, Proc. Amer. Math. Soc.  \textbf{67}:1 (1977), 179--186.

\bibitem{Blasco-78}
J.L. Blasco, Two problems on $k_R$-spaces, Acta Math. Hung. \textbf{32}:1-2 (1978),  27--30.


\bibitem{Borges-81}
C. Borges, A stratifiable $k_R$-space which is not a $k$-space, Proc. Amer. Math. Soc.  \textbf{81}:2 (1981),  308--310.


\bibitem{Comfort-99}
W.W. Comfort, Fourteen questions from the period 1965--1995, Topol. Appl. \textbf{97} (1999), 51--77.



\bibitem{Eng}
R.~Engelking, General Topology, Heldermann Verlag, Berlin, 1989.

\bibitem{Frolik-tcc}
Z. Frol\'{\i}k, The topological product of two pseudocompact spaces, Czech. Math. J. \textbf{10}  (1960), 339--349.


\bibitem{Gabr-B1}
S. Gabriyelyan, Topological properties of spaces of Baire functions, J. Math. Anal. Appl. \textbf{478} (2019), 1085--1099.

\bibitem{Gab-LF}
S. Gabriyelyan, Topological properties of strict $(LF)$-spaces and strong duals of Montel  strict $(LF)$-spaces,  Monatshefte f\"{u}r Mathematik,  \textbf{189} (2019), 91--99.

\bibitem{Gabr:weak-bar-L(X)}
S.~Gabriyelyan, Locally convex properties of free locally convex spaces, J. Math. Anal. Appl. \textbf{480} (2019), 123453.

\bibitem{Gabr-seq-Ascoli}
S. Gabriyelyan, Ascoli and sequentially Ascoli spaces, Topology Appl. \textbf{285} (2020), No 107401, 28 pp.

\bibitem{Gabr-pseudo-Ascoli}
S. Gabriyelyan, Ascoli's theorem for pseudocompact spaces, Revista de la Real Academia de Ciencias Exactas, Fisicas y Naturales. Serie A. Matematicas RACSAM \textbf{114} (2020), No 174, 10 pp.

\bibitem{Gabr-lc-Ck}
S. Gabriyelyan, Local completeness of $\CC(X)$, Revista de la Real Academia de Ciencias Exactas, Fisicas y Naturales. Serie A. Matematicas RACSAM, \textbf{117}:4 (2023), No 152, 8pp.

\bibitem{Gabr-Ascoli-Banach}
S. Gabriyelyan, A Banach space characterization of (sequentially) Ascoli spaces, Topology Appl. \textbf{341} (2024), 108748, 7pp.

\bibitem{GGKZ}
S.~Gabriyelyan, J.  Greb\'{\i}k, J. K\c{a}kol, L. Zdomskyy,  The Ascoli property for function spaces, Topology Appl. \textbf{214} (2016), 35--50.

\bibitem{GR-kR-sR}
S. Gabriyelyan, E. Reznichenko, On $k_\IR$-spaces and  $s_\IR$-spaces, submitted.

\bibitem{Husek}
M. Hu\v{s}ek, Products of quotients and of $k'$-spaces, Comment. Math. Univ. Carol.  \textbf{12}:1 (1971), 61--68.

\bibitem{Jar}
H.~Jarchow, \emph{Locally Convex Spaces}, B.G. Teubner, Stuttgart, 1981.


\bibitem{Kar-Wil}
S.M. Karnik, S. Willard, Natural covers and $R$-quotient maps, Can. Math. Bull. \textbf{25}:4 (1982), 456--461.

\bibitem{Kato}
A. Kato, A note on pseudocompact spaces and $k_R$-spaces, Proc. Amer. Math. Soc. \textbf{63}  (1977), 175--176.


\bibitem{Mazur1952}
S. Mazur, On continuous mappings on Cartesian products, Fund. Math. \textbf{39} (1952), 229--338.

\bibitem{mcoy}
R.A. McCoy, I.Ntantu, \emph{Topological Properties of Spaces of Continuous Functions}, Lecture Notes in Math.  \textbf{1315}, 1988.

\bibitem{Mi73}
E. Michael, On $k$-spaces, $k\sb{R}$-spaces and $k(X)$, Pacific J. Math. \textbf{47}  (1973), 487--498.

\bibitem{Michael-lc}
E. Michael, Local compactness and cartesian products of quotient maps and $K$-spaces, Ann. Inst. Fourier (Grenoble) \textbf{18} (1968), 281--286.

\bibitem{NaB}
L. Narici, E. Beckenstein,  \emph{Topological vector spaces}, Second Edition, CRC Press, New York, 2011.

\bibitem{Noble}
N. Noble, Ascoli theorems and the exponential map, Trans. Amer. Math. Soc. \textbf{143}  (1969), 393--411.

\bibitem{Noble-cc}
N. Noble, Countably compact and pseudocompact products, Czech. Math. J. \textbf{19} (1969), 390--397.

\bibitem{Nob}
N. Noble,  The continuity of functions on Cartesian products, Trans. Amer. Math. Soc. \textbf{149}  (1970), 187--198.

\bibitem{Noble-72}
N. Noble, Products of uncountably many $k$-spaces, Proc.  Amer. Math. Soc. \textbf{31}:2 (1972), 609--612.



\bibitem{nyikos}
P.J. Nyikos,  Metrizability and Fr\'{e}chet--Urysohn property in topological groups,  Proc. Amer. Math. Soc. \textbf{83}  (1981), 793--801.

\bibitem{Oku-Par}
O. Okunev, A.R. P\'{a}ramo, Functional tightness, $R$-quotien mappings and products, Topol. Appl. \textbf{228} (2017), 236--242.


\bibitem{Whitehead}
J.H.C. Whitehead, A note on a theorem of Borsuk, Bull. Amer. Math. Soc \textbf{54} (1958), 1125--1132.

\bibitem{Wilansky}
A. Wilansky, Mazur spaces, Internat. J. Math. \textbf{4} (1981), 39--53.


\end{thebibliography}

\end{document}